\renewcommand{\emptyset}{\varnothing}
\setlist[itemize]{noitemsep, topsep=3pt, leftmargin=18pt}
\setlist[enumerate]{noitemsep, topsep=3pt, leftmargin=22pt}
\renewcommand\section{\@startsection {section}{1}{\z@}%
                                   {0ex \@plus 0ex \@minus 0ex}%
                                   {0.5ex \@plus0ex}%
                                   {\normalfont\LARGE\bfseries}}
\renewcommand\subsection{\@startsection{subsection}{2}{\z@}%
                                     {0ex\@plus 0ex \@minus 0ex}%
                                     {0.5ex \@plus 0ex}%
                                     {\normalfont\Large\bfseries}}
\renewcommand\subsubsection{\@startsection{subsubsection}{3}{\z@}%
                                     {0ex\@plus 0ex \@minus 0ex}%
                                     {0.5ex \@plus 0ex}%
                                     {\normalfont\large\bfseries}}
\def\thm@space@setup{%
  \thm@preskip=2mm
  \thm@postskip=\thm@preskip 
}
\newcommand{\red}[1]{\textcolor{black}{#1}}
\newcommand{\brown}[1]{\textcolor{black}{#1}}
\theoremstyle{definition}
\newtheorem{remark}{Remark}
\newtheorem{example}{Example}
\newtheorem{definition}{Definition}
\newtheorem{theorem}{Theorem}
\newtheorem{lemma}{Lemma}
\newtheorem{corollary}{Corollary}
\newtheorem{proposition}{Proposition}
\newtheorem*{remark-conjecture}{Remark-conjecture}
\newcommand{\R}{\mathbb{R}}
\newcommand{\N}{\mathbb{N}}
\begin{document}

\setlength{\abovedisplayskip}{5pt}
\setlength{\belowdisplayskip}{5pt}
\setlength\abovedisplayshortskip{5pt}
\setlength\belowdisplayshortskip{5pt}

\author{Ivan Novikov, Université Paris Dauphine (CEREMADE)}
\title{\vspace{-2.1cm} Zero-Sum State-Blind Stochastic Games \\with Vanishing
Stage Duration
\vspace{-0.42cm}}
\date{}

\maketitle

\vspace{-1.8cm}

\begin{abstract}
In stochastic games with stage duration $h$, players act at times $\red{0}, h, 2h$, \brown{and so on.} The payoff and leaving probabilities are proportional to $h$. \brown{As $h$ approaches $0$}, such \brown{discrete-time} games approximate games \brown{played} in continuous time. The behavior of the values when $h$ tends to $0$ was already studied in the case of stochastic games with perfect observation of the state. 

We \brown{examine} the same question for the case of state-blind stochastic games. Our main \brown{finding is that, as $h$ approaches $0$,} the value of any state-blind stochastic game with stage duration $h$ converges to \red{the} unique viscosity solution of a partial differential equation.

\vspace{1cm}

\textbf{Keywords: Stochastic games, Zero-sum stochastic games, 
State-blind stochastic games, Shapley operator, Varying stage duration, 
Viscosity solution, Continuous-time Markov games}
\end{abstract}

\vspace{1cm}

\tableofcontents

\newpage

\textbf{Notation:}
\vspace{-0.15cm}
\begin{itemize}
	\item $\N^*$ is the set of all positive integers;
	\item $\R_+ := \{x : x \in \R \text{ and } x \ge 0\}$;
	\item If $f(x)$ is a function defined on a set $X$, then 
	$\|f(x)\|_\infty := \sup_{x \in X}|f(x)|$;
	\item If $f(x)$ is a function defined on a finite set $X$, then 
	$\|f(x)\|_1 := \sum_{x \in X}|f(x)|$;
	\item If $C$ is a finite set, then 
	$\Delta (C)$ is the set of probability measures on $C$;
	\item If $X$ is a finite set, and $f,g : X \to \R$ are two functions, then
	$$\langle f (\cdot), g (\cdot)\rangle := \sum\nolimits_{x\in X} f(x) g(x);$$
  If $x = (x_1, \ldots, x_n), y = (y_1, \ldots, y_n) \in \R^n$, then 
  $\langle x, y \rangle := \sum_{i=1}^n x_i y_i$;
	\item If $X$ is a finite set, $\zeta \in \Delta(X)$, and 
	$\mu$ is a $|X| \times |X|$ matrix, then for any $x\in X$
	$$\left(\zeta * \mu\right)(x) := 
	\sum\nolimits_{x'\in X} \zeta(x') \cdot \mu_{x'x};$$
	\item If $I, J$ are \red{finite} sets and $g : I \times J \to \R$ is a function, then 
	\begin{multline*}\hspace{-1cm}
  \texttt{Val}_{I \times J} [g(i,j)] := 
	\sup_{x \in \Delta(I)} \inf_{y \in \Delta(J)} 
	\left(\int_{I \times J} g (i, j) \; dx (i) \otimes dy (j)\right) =
	\inf_{y \in \Delta(J)} \sup_{x \in \Delta(I)} 
	\left(\int_{I \times J} g (i, j) \; dx (i) \otimes dy (j)\right),
  \end{multline*}

	i.e. $\texttt{Val}_{I \times J} [g(i,j)]$ is the value of \brown{the} one-shot zero-sum game with action spaces $I, J$ and with payoff function $g$.
\end{itemize}

\section{Introduction}

\textbf{Zero-sum stochastic games} 
with perfect observation of the state were first defined in \cite{Sha53}. Such a game is played in discrete time as follows. At the beginning of each stage, player~1 and player~2 observe the current state and remember the actions taken in previous stages. They then choose their mixed actions, after which player~1 receives some payoff, depending on players' actions and the current state. Player~2 receives the opposite of this payoff. The next state is chosen according to some probability law, depending on players' actions and the current state. For a fixed discount factor $\lambda \in (0,1]$, player 1 aims to maximize the $\lambda$-discounted total payoff
$E\left(\lambda \sum\limits_{n = 1}^\infty (1-\lambda)^{n-1} g_n\right),$
where $g_n$ is $n$-th stage payoff; conversely, player 2 aims to minimize it. Under standard assumptions, maxmin and minmax coincide, and the resulting quantity is called the value, denoted by $v_\lambda$.

A similar model of stochastic games, in which players cannot observe the current state, is called state-blind stochastic games. In such games, the players can observe only the initial probability distribution on the states and the previous actions. One can define the value $v_\lambda$ in the same way as above.

An analogous model in continuous time is \textbf{continuous-time Markov games}, in which players are allowed to choose actions at any moment of time. Players' actions at time $t$ may depend only on the current state and on $t$ (with some technical measurability conditions). Such games were introduced in \cite{Zac64}, and later studied in many other papers, for example, in \cite{GuoHer03} and \cite{GuoHer05}. In a continuous-time Markov game, player 1 tries to maximize the $\lambda$-discounted total payoff
$E(\int_0^{+\infty} \lambda e^{-\lambda t} g_t dt),$
where $g_t$ is instantaneous payoff at time $t$. Player 2 tries to minimize it. Just as in discrete-time games, under standard assumptions maxmin and minmax coincide, and the resulting quantity is called the value. One can also consider a more general total payoff $E(\int_0^{+\infty} k(t) g_t dt),$ where $k$ is a nonincreasing continuous function.

The article \cite{Sor17} considers discretizations of continuous-time Markov games. In a discretization, players can act only at specific times $t_1, t_2, t_3, \ldots$, rather than at any moment of time, as in a standard continuous-time Markov game. However, the state variable continues to change as it would in the continuous-time model. The article \cite{Sor17} considers both the case of perfect observation of the state and the state-blind case. For each of these cases it is proved that if $\sup\{t_{i+1} - t_i\}$ tends to $0$, then the value of the discretization of a continuous-time Markov game converges to the unique viscosity solution of a differential equation.

\textbf{Zero-sum stochastic games with perfect observation of the state 
and with stage duration} were introduced in \cite{Ney13}. They propose another way to approximate continuous-time games. Starting from a stochastic game $\Gamma$ (with perfect observation of the state) with stage duration $1$, Neyman considers a family $\Gamma_h$ of stochastic games in which players act at time $0, h, 2h,$ and so on. In these games, both the payoffs and leaving probabilities are normalized at each stage, making them proportional to $h$. This gives the value $v_{h,\lambda}$ depending both on the stage duration $h$ and the discount factor $\lambda$. A particular case of interest is when $h$ is small, which approximates a game played in continuous time with $\lambda$-discounted payoff $\int_0^{+\infty} \lambda e^{-\lambda t} g_t dt$.

In \cite{Ney13}, Neyman considered the asymptotics of $v_{h,\lambda}$ when $h$ tends to $0$. Among other things, it was proved that when $h$ tends to $0$, the value of a finite game with stage duration tends to the unique solution of a functional equation. Subsequently, the article \cite{SorVig16} generalized some of the results from \cite{Ney13} and obtained new findings; it considers the case in which the state and action spaces may be compact, and stage durations $h_n$ may depend on the stage number $n$. In our paper, we also assume that stage duration may depend on the stage number.

In this framework of games with stage duration \cite{Ney13}, we introduce more general total payoffs for a game with stage duration. When $\sup h_i$ is small, these payoffs approximate the continuous-time game with the total payoff $\int_0^{+\infty} k(t) g_t dt,$ where $k$ is a nonincreasing continuous function. Our \textbf{Theorem~\ref{xmn444}} states that the value of such a stochastic game with stage duration converges uniformly, as $\sup h_i$ tends to $0$, to the unique viscosity solution of a differential equation. This theorem generalizes an already known result for discounted games (\cite[Theorem~1]{Ney13}, \cite[Corollary 7.1]{SorVig16}). The proof is based on a result from \cite{Sor17}.

The study of \textbf{zero-sum state-blind stochastic games with stage duration} is the main goal of this article. In \S\ref{def1}, we give a natural definition of such games. Our main result, \textbf{Theorem~\ref{thh1}}, states that when $\sup h_i$ tends to $0,$ the value of a state-blind stochastic game with stage duration converges uniformly to the unique viscosity solution of a partial differential equation. Thus Theorem~\ref{thh1} is an analogue of Theorem~\ref{xmn444} for state-blind stochastic games. In the particular case of discounted games the equation is autonomous (does not depend on $t$), see Corollary~\ref{thh3}. The proof of Theorem~\ref{thh1} has the same structure as a similar result in \cite{Sor17}.

\section{Organization of the paper}

In \S\ref{stoch-intro}, we provide all the necessary information about stochastic games with perfect observation of the state. In \S\ref{one-h}, we define stochastic games with perfect observation of the state and with stage duration, which generalize the discounted case considered in literature before. In \S\ref{ther5}, we state Theorem~\ref{xmn444}, \red{whose} proof is given in \S\ref{xmn555}.

In \S\ref{repeatedGames}, we recall and discuss the model of state-blind stochastic games. In \S\ref{def1}, we introduce state-blind stochastic games with stage duration. In \S\ref{jjjnt44}, we state Theorem~\ref{thh1}, \red{whose} proof is given in \S\ref{thh10}.

In \S\ref{disc}, we give some results about the discounted case, in both the framework of perfect observation of the state and the state-blind framework. In \S\ref{the_final}, we give some final comments.

\newpage

\section{Games with stage duration (perfect observation of the state)}
\label{second}

\subsection{Zero-sum stochastic games with perfect observation of the state}
\label{stoch-intro}

In this section, we introduce all the necessary notions from the theory of  zero-sum stochastic games with perfect observation of the state. This section is partially based on books \cite{MerSorZam15}, \cite{LarRenSor19}, and \cite{Sor02}. 

Now, we give a minor generalization of a construction from \cite{Sha53}. A \emph{zero-sum stochastic game with perfect observation of the state} is a $5$-tuple 
$(\Omega, I, J, \{g_m\}_{m\in\N^*}, \{P_m\}_{m\in\N^*})$, where: 
\begin{itemize}
	\item $\Omega$ is a finite non-empty set of states;
	\item $I$ is a finite non-empty set of actions of player~1; 
	\item $J$ is a finite non-empty set of actions of player~2;
	\item $g_n : I \times J \times \Omega \to \R$ is the $n$-th stage payoff 
	function of player~1;
	\item $P_n : I \times J \to 
\{\text{row-stochastic matrices } |\Omega| \times |\Omega|$\} is a 
transition probability function at the $n$-th stage.
\end{itemize}

Recall that a matrix $A = (a_{i j})$ is called row-stochastic if $a_{i j} \ge 0$ for all $i, j$, and $\sum_{j} a_{i j} = 1$ for any fixed $i$.
Note that in our model we allow $g_n$ and $P_n$ to be dependent on the stage number $n$, which is not the case in the original model from \cite{Sha53}. 

We denote by $P_n (i,j) (\omega_a, \omega_b)$ the $(\omega_a,\omega_b)$-th element of the matrix $P_n (i,j)$.


A stochastic game $(\Omega, I, J, \{g_m\}_{m\in\N^*}, \{P_m\}_{m\in\N^*})$ proceeds in stages as follows. The initial state $\omega_1$ is known to the players. At each stage $n \in \N^*$:
\begin{enumerate}
	\item Players observe the current state $\omega_n$ and remember actions of each other at the previous stage.
	\item Players simultaneously choose mixed actions. Player~1 chooses $x_n \in \Delta(I)$ and player~2 chooses $y_n \in \Delta(J)$.
	\item An action $i_n \in I$ of player~1 (respectively $j_n \in J$ of player~2) is chosen according to the probability measure $x_n \in \Delta(I)$ (respectively $y_n \in \Delta(J)$). They are observed by both players.
	\item Player~1 obtains a payoff $g_n = g_n(i_n, j_n, \omega_n)$, while player~2 obtains payoff $-g_n$. The new state $\omega_{n+1}$ is chosen according to the probability law $P_n = P_n(i_n, j_n)(\omega_n, \cdot)$.
\end{enumerate}

The above description of the game is assumed to be common knowledge.

\emph{A history of length $n \in \N$} for the stochastic game 
$(\Omega, I, J, \{g_m\}_{m\in\N^*}, \{P_m\}_{m\in\N^*})$ is\\
$(\omega_1, i_1, j_1, \omega_2, i_2, j_2, \ldots, \omega_{n-1}, i_{n-1}, j_{n-1}, \omega_n)$.
The set of all histories of length $n$ is 
$H_n := \Omega \times (I \times J \times \Omega)^{n-1}$.
A \emph{(behavior) strategy} of player~1 (respectively player~2) is a function 
$\sigma : \bigcup_{n \ge 1} H_n \to \Delta(I)$ 
(respectively $\tau : \bigcup_{n \ge 1} H_n \to \Delta(J)$). 
Players' strategies induce probability distribution on the set 
$\Omega \times (I \times J \times \Omega)^{\N^*}$. 
(Indeed, strategies induce a probability distribution on the set $H_1$, 
then on the set $H_2$, etc. By Kolmogorov extension theorem, 
this probability can be extended in a unique way to the set 
$\Omega \times (I \times J \times \Omega)^{\N^*}$). 
In particular, given an initial state $\omega \in \Omega$, 
strategies $\sigma : \bigcup_{n \ge 1} H_n \to \Delta(I), 
\tau : \bigcup_{n \ge 1} H_n \to \Delta(J)$, and the induced probability 
distribution $P^\omega_{\sigma, \tau}$ on $\Omega \times 
(I \times J \times \Omega)^{\N^*}$, we can consider the expectation 
$E^\omega_{\sigma, \tau}$ of a random variable on $\bigcup_{n \ge 1} H_n$.

Now, we need to choose how to compute a total payoff function. Given a stochastic game
$\Gamma = (\Omega, I, J, \{g_m\}_{m\in\N^*}, \{P_m\}_{m\in\N^*})$ 
and a sequence $\{b_m\}_{m \in \N^*}$ with $b_m \ge 0 \; (m \in \N^*)$ and with $0 < \sum_{n=1}^\infty b_n < \infty$, we consider the stochastic game $\Gamma(\{b_m\}_{m \in \N^*})$ 
with total payoff
$E^\omega_{\sigma, \tau} \left(\sum_{n=1}^\infty b_n g_n\right).$
(It depends on a strategy profile $(\sigma, \tau)$ and an initial state $\omega$).

Some particular cases of the above definitions have its own name. For a fixed 
$T \in \N^*$, the \emph{(normalized) repeated $T$ times game $\Gamma^T$} is obtained 
if we set in the above definition
$b_n = 1/T$ for $n = 1, 2, \ldots, T$, and $b_n = 0$ for $n \ge T+1$.
For a fixed $\lambda \in (0,1)$, the 
\emph{(normalized) $\lambda$-discounted\footnote{Note that in the given definition the weight of $n$-th stage is $(1-\lambda)^{n-1}$. In the economic literature, the weight of $n$-th stage is often $\lambda^{n-1}$.} 
game 
$\Gamma^\lambda$} is obtained if we set in the above definition
$b_n = \lambda (1-\lambda)^{n-1}$ for all $n \in \N^*$.

Analogously to single-shot zero-sum games, we may define the value and 
the $(\varepsilon$-$)$optimal strategies of stochastic games.
Given a stochastic game 
$\Gamma = (\Omega, I, J, \{g_m\}_{m\in\N^*}, \{P_m\}_{m\in\N^*})$,
the stochastic game $\Gamma(\{b_m\}_{m \in \N^*})$ with payoff 
$\sum_{n=1}^\infty b_n g_n$ is said to have a value 
$V : \Omega \to \R$ if for all $\omega \in \Omega$ we have
$V(\omega) = \sup\limits_{\sigma} \inf\limits_{\tau} E^\omega_{\sigma, \tau} 
\left(\sum\nolimits_{n=1}^\infty b_n g_n\right) = 
\inf\limits_{\tau} \sup\limits_{\sigma} E^\omega_{\sigma, \tau} 
\left(\sum\nolimits_{n=1}^\infty b_n g_n\right).$



Sometimes instead of the transition probability 
function $P$, we consider the \emph{kernel}\\
$q : I \times J \to 
\{\text{matrices } |\Omega| \times |\Omega|$\}
defined
by the expression
$$q(i,j)(\omega, \omega') = 
\begin{cases}
P(i,j)(\omega, \omega') &,\text{if } \omega \neq \omega';\\
P(i,j)(\omega, \omega) - 1 &,\text{if } \omega = \omega'.
\end{cases}$$
In particular, we are sometimes going to define a stochastic game by using 
kernels instead of transition probability functions. 
Note that for any $\omega \in \Omega$ we have
$\sum_{\omega' \in \Omega} q(i,j)(\omega, \omega') = 0$.

\subsection{Zero-sum stochastic games with perfect observation of the state 
and with stage duration}
\label{one-h}

In this subsection, we introduce games with stage duration. 

Let us fix the notation that will be used during the entire section. Let $T$ be either a positive number or $+\infty$. Let $T_\infty$ be a partition of $[0,T)$; in other words, $T_\infty$ is a strictly increasing sequence $\{t_n\}_{n\in\N^*}$ such that $t_1 = 0$ and $t_n \xrightarrow{n \to \infty} T$. For each given partition $T_\infty$, denote $h_n = t_{n+1}-t_n$ for any $n \in \N^*$.

Throughout this entire subsection, we fix a stochastic game $(\Omega, I, J, g, q)$, where $q$ is the kernel of the game. (We assume that the payoff function $g$ and the kernel $q$ do not depend on the stage number).

The state space $\Omega$ and the action spaces $I, J$ of both players are independent of the partition $T_\infty$ in a stochastic game with $n$-th stage duration $h_n$. The payoff function and the kernel depend on $T_\infty$. For $n \in \N^*$, the $n$-th stage payoff function is $g_n = h_n g$, and the $n$-th stage kernel function is $q_n = h_n q$. The following definition summarizes this.
\begin{definition}
Given a stochastic game $(\Omega, I, J, g, q)$ and a partition $T_\infty$ of $[0,T)$, the \emph{stochastic game with perfect observation of the state and with $n$-th stage duration $h_n$} is the stochastic game
$(\Omega, I, J, \{h_m g\}_{m\in\N^*}, \{h_m q\}_{m\in\N^*}).$
\end{definition}

Now we define a total payoff. Fix a nonincreasing continuous function $k : [0,T] \to \R_+$ with $\int_{0}^T k(t) dt = 1$.

\begin{definition}
\label{ty3ee}
\red{Given a stochastic game $(\Omega, I, J, g, q)$ and a partition $T_\infty$ of $[0,T)$,} \emph{the stochastic game 
$(\Omega, I, J, g, q)$
with perfect observation of the state and 
with $n$-th stage duration $h_n$, weight function 
$k(t)$, initial time $t_n \in T_\infty$, and initial state $\omega$}, 
is the stochastic game 
$(\Omega, I, J, \{h_m g\}_{m\in\N^*}, \{h_m q\}_{m\in\N^*})$
with total payoff
$G_{T_\infty,k}(t_n,\omega) := E_{\sigma,\tau}^{\omega}
\left( \sum\limits_{i = n}^\infty h_i k(t_i) g_i \right).$
Denote by $v_{T_\infty,k}(t_n,\omega)$ the value of the game with such a 
total payoff.

If $t \in [t_n,t_{n+1}]$ and 
$t = \alpha t_n + (1-\alpha) t_{n+1}$, we then define
$$v_{T_\infty,k}(t,\omega) = 
\alpha v_{T_\infty,k}(t_n,\omega) + 
(1-\alpha) v_{T_\infty,k}(t_{n+1},\omega).$$
\end{definition}

We are interested in the behavior of such games when the duration of each stage is vanishing, i.e. we want to know what happens when $\sup h_i \to 0$.

\begin{remark}[Why such a definition]
	If all $h_i$ are small, then the value of a stochastic game with $n$-th stage duration $h_n$, weight function $k(t)$, and initial time $s \in \R_+$, is close to the value of	the continuous-time game with total payoff $\int_s^\infty k(t) g_t dt$. See Propositions \ref{h3er} and \ref{4cd6YYu} in \S\ref{the_final}.
\end{remark}

\begin{example}[Discounted games]
If initial time is $0$, $T=+\infty$, and $k(t) = \lambda \exp\{-\lambda t\}$,  we obtain the normalized $\lambda$-discounted stochastic game with stage duration. We discuss it in more details in \S\ref{disc}.
\end{example}

\begin{example}[Repeated finitely number of times games]
If we take $T \in \N^*$, initial time $0$, $k(t) = 1/T$ for $t\in [0,T]$, $h_n = 1$ for $n = 1, \ldots, T$, and $h_n = 0$ for $n > T$, then we recover the case of normalized repeated $T$ times games with total payoff $\frac{1}{T}\sum_{i = 1}^T g_i$. \red{Note that formally such a choice of $h_n$ does not satisfy the definition, because $h_n$ are assumed to be strictly positive. To fix it, we may consider a small $\varepsilon$ and define $h_n = 1 - \varepsilon$ for $n = 1, \ldots, T$, and $h_n = (1-\varepsilon) \varepsilon^{n-T}$ for $n > T$. The closer $\varepsilon$ is to $0$, the closer we are to the case of normalized repeated $T$ times games.}
\end{example}

\begin{remark}
\label{important}
Note that
$\sum\nolimits_{n = 1}^\infty h_n k(t_n)$
is a (left) Riemann sum for the integral
$\int_{0}^T k(t) dt.$
Hence for any $\varepsilon > 0$ there is $\delta > 0$ such that for any
partition $T_\infty$ with $\sup_{i\in\N^*} h_i < \delta$, we have
$$\left\|\sum_{n = 1}^\infty h_n k(t_n) g_n\right\|_\infty \le
\left|\sum_{n = 1}^\infty h_n k(t_n)\right| 
\left\|g\right\|_\infty \le
\left(\int_{0}^T k(t) dt + \varepsilon\right) \left\|g\right\|_\infty = 
(1+\varepsilon) \left\|g\right\|_\infty.$$
\end{remark}

\subsection{Theorem~\ref{xmn444}: In the case of perfect observation of the state, the uniform limit 
$\lim_{\sup_{i\in\N^*} h_i \to 0} v_{T_\infty,k}$ 
is the unique viscosity solution of some differential equation}
\label{ther5}

First, we recall the definition of a viscosity solution.

\begin{definition}
Given a function $H : [0,T) \times \Omega \times \R^{n+1} \to \R$, a function $u : [0,T) \times \Omega \to \R$ is called a \emph{viscosity solution} of the differential equation 
$$0 = \frac{d}{d t} u (t,\omega) + H(t, \omega, u(t,\omega), \nabla u(t,\omega))$$
if:
\begin{enumerate}
\item for any $\mathcal C^1$ function $\psi : [0,T) \times \Omega \to \R$ with  $u - \psi$ having a strict local maximum at $(\overline{t}, \overline{\omega}) \in [0,T) \times \Omega$ we have
$0 \le \frac{d}{d t} \psi (\overline{t}, \overline{\omega}) + H(\overline{t}, 
\overline{\omega}, \psi(\overline{t}, \overline{\omega}), \nabla \psi(\overline{t}, \overline{\omega}))$;
\item for any $\mathcal C^1$ function $\psi : [0,T) \times \Omega \to \R$ with 
$u - \psi$ having a strict local minimum at $(\overline{t}, \overline{\omega}) \in 
[0,T) \times \Omega$ we have
$0 \ge \frac{d}{d t} \psi (\overline{t}, \overline{\omega}) + H(\overline{t}, 
\overline{\omega}, \psi(\overline{t}, \overline{\omega}), \nabla \psi(\overline{t}, \overline{\omega}))$.
\end{enumerate}
\end{definition}

\begin{theorem}
\label{xmn444}
If $(\Omega, I, J, g, q)$ is a finite stochastic game with perfect observation of the state, then the uniform limit
$\lim\limits_{\underset{h_1+h_2+\ldots = T}{\sup_{i\in\N^*} h_i \to 0}} 
v_{T_\infty,k}$ exists and is the unique viscosity solution of the differential equation (in $v(t,\omega)$)
\begin{equation}
\label{reenn3}
0 = \frac{d}{dt} v(t,\omega) + \texttt{Val}_{I\times J} [k(t) g(i,j,\omega) + 
\langle q(i,j)(\omega,\cdot)\,, v(t,\cdot) \rangle].
\end{equation}
\end{theorem}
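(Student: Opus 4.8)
The plan is to reduce the statement to the vanishing-mesh convergence theorem of \cite{Sor17} for discretized continuous-time Markov games, after exhibiting the dynamic programming recursion of $v_{T_\infty,k}$ and recognizing it as the explicit time-discretization scheme attached to~\eqref{reenn3}. First I would derive the Shapley recursion: decomposing the game started at time $t_n$ into its first stage plus the continuation from $t_{n+1}$, and using that the $n$-th stage transition matrix is $\mathrm{Id}+h_n q(i,j)$, one obtains, after pulling the diagonal term $v_{T_\infty,k}(t_{n+1},\omega)$ out of the value operator,
\begin{equation*}
\frac{v_{T_\infty,k}(t_n,\omega)-v_{T_\infty,k}(t_{n+1},\omega)}{h_n}
=\texttt{Val}_{I\times J}\bigl[k(t_n)g(i,j,\omega)+\langle q(i,j)(\omega,\cdot),\,v_{T_\infty,k}(t_{n+1},\cdot)\rangle\bigr].
\end{equation*}
The left-hand side is a backward difference quotient approximating $-\tfrac{d}{dt}v(t,\omega)$, so this identity is exactly an explicit marching scheme for~\eqref{reenn3} on the grid $T_\infty$.

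Next I would secure the compactness needed to pass to the limit. Remark~\ref{important} already bounds the total payoff, hence $\|v_{T_\infty,k}\|_\infty\le(1+\varepsilon)\|g\|_\infty$ for every partition of small enough mesh; substituting this bound back into the recursion gives $|v_{T_\infty,k}(t_n,\omega)-v_{T_\infty,k}(t_{n+1},\omega)|\le C\,h_n$, with $C$ depending only on $\|g\|_\infty$, on $q$, and on $k$. Combined with the linear interpolation of Definition~\ref{ty3ee}, the family $\{v_{T_\infty,k}\}$ is therefore uniformly bounded and uniformly Lipschitz in $t$ (continuity in $\omega$ being vacuous since $\Omega$ is finite), so by the Arzel\`a--Ascoli theorem every sequence of partitions with $\sup_i h_i\to 0$ has a uniformly convergent subsequence.

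Then I would identify the limit. Since the exact continuous-time transition $e^{h_n q}$ used in \cite{Sor17} and the linearized transition $\mathrm{Id}+h_n q$ used here differ by $O(h_n^2)$ per stage, and $\sum_n h_n^2\le(\sup_i h_i)\,T\to 0$, the two schemes share the same vanishing-mesh limit, which by \cite{Sor17} is the unique viscosity solution of the corresponding equation. Concretely, any subsequential limit $v$ is a viscosity solution of~\eqref{reenn3} by the usual stability argument: if $v-\psi$ has a strict local maximum at $(\overline t,\overline\omega)$ for a $\mathcal C^1$ test function $\psi$, the approximate maximizers of $v_{T_\infty,k}-\psi$ converge to $(\overline t,\overline\omega)$; evaluating the recursion there and letting $\sup_i h_i\to 0$, using the continuity of $\texttt{Val}_{I\times J}$ and of $\psi'$, yields the subsolution inequality, and the supersolution inequality is symmetric. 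The comparison principle for~\eqref{reenn3}, also provided by \cite{Sor17}, then forces all subsequential limits to agree, which upgrades subsequential convergence to convergence of the whole family and identifies the limit as the announced unique viscosity solution.

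The step I expect to be the main obstacle is uniqueness together with the precise matching to the hypotheses of \cite{Sor17}. The Hamiltonian $H(t,\omega,r,p)=\texttt{Val}_{I\times J}[k(t)g(i,j,\omega)+\langle q(i,j)(\omega,\cdot),p\rangle]$ is only Lipschitz, not smooth, in its last argument, so the comparison principle cannot use any regularity of $H$ and must instead exploit that $\texttt{Val}_{I\times J}$ is monotone and nonexpansive in $v(t,\cdot)$, closing the estimate through Gronwall's inequality. One must also absorb the non-autonomous weight $k(t)$ into the data and handle the horizon: for $T<\infty$ the terminal value $v(T,\cdot)=0$ has to be propagated, whereas for $T=+\infty$ it is the integrability $\int_0^{+\infty}k(t)\,dt=1$ that keeps both the scheme and its limit bounded and makes the boundary condition $\lim_{t\to+\infty}v(t,\cdot)=0$ available for the comparison argument. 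Checking that \cite{Sor17} applies after these reductions is where I expect most of the technical care to be needed.
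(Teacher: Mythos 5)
Your proposal is correct but takes a genuinely different route from the paper's. The paper does not use compactness or a direct viscosity-stability argument for this theorem; instead it compares the Shapley operator $\psi^h_n$ of the game with stage duration (stage payoff $h_n k(t_n) g$, transition $\mathrm{Id}+h_n q$) with the operator $\overline\psi^h_n$ of the discretized continuous-time game of \cite{Sor17} (stage payoff $\int_{t_n}^{t_{n+1}}k(t)g\,dt$, transition $e^{h_n q}$), obtains a per-stage error $C_1 h_n\big(k(t_n)-k(t_{n+1})\big)+C_2 h_n^2\|f\|_\infty$, telescopes it using nonexpansiveness of the operators, and concludes $\|v_{T_\infty,k}-v^{\texttt{cont}}_{T_\infty,k}\|_\infty\to 0$, after which \cite[Proposition~4.3]{Sor17} supplies all the viscosity-solution content. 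Your route --- Shapley recursion, Arzel\`a--Ascoli, stability at approximate extremizers, then the comparison principle --- is essentially the strategy the paper reserves for the state-blind Theorem~\ref{thh1}, and it works here too; it avoids introducing $v^{\texttt{cont}}_{T_\infty,k}$ but re-proves a convergence that \cite{Sor17} already provides, whereas the paper's argument additionally yields a quantitative $O(\sup_i h_i)$ error bound up to the tail. Two caveats if you carry out your version: first, your auxiliary claim that $\sum_n h_n^2\le(\sup_i h_i)\,T\to 0$ fails when $T=+\infty$; the paper handles the infinite horizon by truncating at a time $S$ with $\int_S^T k$ small, so that the tails of both values are uniformly small, and you would need the same device both to compare the two schemes and to upgrade Arzel\`a--Ascoli from local to global uniform convergence on $[0,T)$. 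Second, since the coupling term $\langle q(i,j)(\omega,\cdot),v(t,\cdot)\rangle$ is nonlocal in $\omega$, in the stability step you must retain the uniformly convergent values $v_{T_\infty,k}(t_{n+1},\omega')$ at states $\omega'\neq\overline\omega$ rather than substituting the test function there; and if you do lean on the scheme-comparison branch, note that you omitted the discrepancy between the stage weight $h_n k(t_n)$ and $\int_{t_n}^{t_{n+1}}k(t)\,dt$, which the paper controls by monotonicity of $k$ via $\sum_n h_n\big(k(t_n)-k(t_{n+1})\big)\le k(0)\sup_i h_i$.
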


\begin{remark}
	For the discounted case $k(t)=\lambda e^{-\lambda t}$, \eqref{reenn3} transforms into
	\begin{equation}
	\label{wwr5}
	\lambda v(\omega) = \texttt{Val}_{I\times J}\left[\lambda g(i,j,\omega) + 
	\langle q(i,j)(\omega,\cdot)\,, v(\cdot)\rangle\right].
	\end{equation} 
  (make a substitution $v(t,\omega) \mapsto e^{-\lambda t} v(\omega)$). For this particular case, Theorem~\ref{xmn444} is already known. The result \cite[Theorem~1]{Ney13} proves it for the uniform case in which $h_n = h$ for all $n$, while the result \cite[Corollary~7.1]{SorVig16} proves it for the general case.
\end{remark}

\begin{remark}
	The above theorem is similar to \cite[Proposition~4.3]{Sor17}, which concerns a similar result in another model of games with stage duration. \red{In addition, the differential equation \eqref{wwr5} also characterizes the value of the analogous game played in continuous time, see \cite[Theorem~5.1]{GuoHer05}.}
\end{remark}

\subsection{The proof of Theorem~\ref{xmn444}}
\label{xmn555}

\subsubsection{Preliminaries: Shapley operator}
\label{SHAP3}
The Shapley operator is a useful tool which uses the recursive structure of a stochastic game to express its value.

Fix an initial time $n \in \N^*$. We consider a finite stochastic game
$G_n = (\Omega, I, J, \{g_m\}_{m\in\N^*}, \{P_m\}_{m\in\N^*})$ with perfect observation of the state, where $\{P_m\}_{m\in\N^*}$ are transitional probability functions and $\sum_{i=1}^\infty \|g_i\|_\infty < \infty$, and with total payoff $E^\omega_{\sigma, \tau} \left(\sum\nolimits_{i=n}^\infty g_i\right).$ Denote by $v_n(\omega)$ the value of such a game.

Denote by $C(\Omega, \R)$ the set of continuous functions from $\Omega$ to $\R$.
For a sequence of maps $S_1, S_2, \ldots$ from a Banach space $C$ to 
itself, for any $z \in C$, we denote $\prod\nolimits_{i = 1}^\infty S_i (z) 
:= \lim\nolimits_{i \to \infty} (S_1 \circ S_2 \circ \cdots \circ S_i (z))$. 

\begin{proposition}[The Shapley operator and its properties]
  \label{shapley}
  Consider the described above game $G_n$. 
  Denote by $\psi_n \: (n \in \N^*)$ the operator
  \begin{align*}
    \psi_n : \; C(\Omega, \R) \to C(\Omega, \R), \quad
    f(\omega) \mapsto
    \texttt{Val}_{I \times J} 
    [g_n(i,j,\omega) + \langle P_n(i,j)(\omega,\cdot)\,, f(\cdot)\rangle].
  \end{align*}
  Then:
  \begin{enumerate}
    \item For each $n\in \N^*$ the operator $\psi_n$ is nonexpansive, i.e.
    $\|\psi_n(f - g)\|_\infty \le \|f-g\|_\infty$;
    \item If $\overline G_n = (\Omega, I, J, \{\overline g_m\}_{m\in\N^*}, \{\overline P_m\}_{m\in\N^*})$ is a finite stochastic game which has the same properties as the game $G_n$, and $\overline \psi_n$ is the operator
    \begin{align*}
      \overline \psi_n : \; C(\Omega, \R) \to
      C(\Omega, \R), \quad
      f(\omega) \mapsto
      \texttt{Val}_{I \times J} 
      \left[\overline g_n(i,j,\omega) + 
      \langle \overline P_n(i,j)(\omega,\cdot)\,, f(\cdot)\rangle\right],
    \end{align*}
    then\footnote{\red{We consider $P_n$ and $\overline P_n$ as functions of 4 variables $(i, j, \omega, \omega')$.}}
    $\left\|\psi_n(f) - \overline \psi_n(f)\right\|_\infty \le
    \left\| g_n - \overline g_n\right\|_\infty
    + \left\|P_n - \overline P_n\right\|_1 \cdot \|f\|_\infty;$
    \item The value $v_n(\omega)$ of the game $G_n$ exists and is equal to
    $\prod\nolimits_{i=n}^\infty \psi_i (0)$. (In particular, 
    such a product is well-defined);
    \item For any $n\in\N^*$ and any $\omega \in \Omega$ we have
    $v_n(\omega) = (\psi_n v_{n+1})(\omega);$
    \item For any $n \in \N^*$, there exists an optimal Markov strategy
    in the game $G_n$.
  \end{enumerate}  
\end{proposition}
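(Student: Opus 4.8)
The whole proposition rests on one elementary property of the operator $\texttt{Val}$: it is monotone and commutes with the addition of a constant, so for any two payoff functions $A, B : I \times J \to \R$ one has $\bigl|\texttt{Val}_{I\times J}[A] - \texttt{Val}_{I\times J}[B]\bigr| \le \|A - B\|_\infty$. I would prove this first (from $B - \|A-B\|_\infty \le A \le B + \|A-B\|_\infty$ and monotonicity), since items~1 and~2 are immediate consequences. For item~1, the payoffs defining $\psi_n(f)(\omega)$ and $\psi_n(g)(\omega)$ differ by $\langle P_n(i,j)(\omega,\cdot),\, f(\cdot)-g(\cdot)\rangle$, which is bounded by $\|f-g\|_\infty$ because each row of $P_n$ is a probability vector; the nonexpansive bound follows. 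For item~2, the two payoffs differ by $\bigl(g_n - \overline g_n\bigr) + \langle (P_n-\overline P_n)(i,j)(\omega,\cdot),\, f(\cdot)\rangle$; I would bound the first summand by $\|g_n - \overline g_n\|_\infty$ and the second by $\|P_n - \overline P_n\|_1\,\|f\|_\infty$ (for fixed $(i,j,\omega)$ the inner sum over $\omega'$ is dominated by the full $\|\cdot\|_1$ norm), and then invoke nonexpansiveness of $\texttt{Val}$.

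Item~3 is where the real work lies, and I would split it into convergence and identification. For convergence, set $w_n^N := \psi_n \circ \cdots \circ \psi_N(0)$. The estimate $\|\psi_k(f)\|_\infty \le \|g_k\|_\infty + \|f\|_\infty$ gives $\|\psi_{N+1}\circ\cdots\circ\psi_M(0)\|_\infty \le \sum_{k=N+1}^M \|g_k\|_\infty$, and since $\psi_n\circ\cdots\circ\psi_N$ is a composition of nonexpansive maps, $\|w_n^M - w_n^N\|_\infty \le \sum_{k > N}\|g_k\|_\infty$ for $M > N$. The hypothesis $\sum_i \|g_i\|_\infty < \infty$ makes this tail vanish, so $(w_n^N)_N$ is Cauchy in $C(\Omega,\R)$ and the product $\prod_{i=n}^\infty \psi_i(0)$ exists.

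For identification, I would note that by finite backward induction $w_n^N$ is exactly the value of the $N$-horizon game with payoff $\sum_{i=n}^N g_i$ (at each stage the one-shot game has a value and the players play a stage-optimal mixed action). Since for every strategy profile the infinite- and $N$-horizon total payoffs differ by at most $\sum_{k>N}\|g_k\|_\infty$, both $\sup_\sigma\inf_\tau$ and $\inf_\tau\sup_\sigma$ of the infinite game lie within $\sum_{k>N}\|g_k\|_\infty$ of $w_n^N$; letting $N\to\infty$ forces them to agree, so the value $v_n$ exists and equals the product. Item~4 is then immediate: using continuity (nonexpansiveness) of $\psi_n$ to pull it out of the limit, $v_n = \lim_N \psi_n\bigl(\psi_{n+1}\circ\cdots\circ\psi_N(0)\bigr) = \psi_n(v_{n+1})$. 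For item~5, I would define each player's Markov strategy to play at stage $k$ in state $\omega$ an optimal mixed action of the finite one-shot game whose value is $\psi_k(v_{k+1})(\omega)$, and verify optimality by a telescoping argument in which the recursion of item~4 controls the conditional continuation value at each stage while the tail bound $\sum_{k>N}\|g_k\|_\infty \to 0$ absorbs the infinite-horizon remainder.

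The main obstacle is the identification step of item~3: the finite-horizon backward induction is routine, but passing to the genuinely infinite game requires that the truncation be uniformly close to the full game over all strategy profiles. This uniformity is precisely what the summability $\sum_i\|g_i\|_\infty < \infty$ supplies, and it is what forces maxmin and minmax of the infinite game to coincide; without it neither existence of the value nor the product formula would follow.
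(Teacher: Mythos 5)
Your proposal is correct. The paper does not actually write out a proof of this proposition: it cites the known discounted and finitely-repeated versions (Mertens--Sorin--Zamir, Theorem~IV.3.2 and Proposition~IV.3.3) and remarks that the general total payoff $\sum_{i\ge n} g_i$ is handled ``analogously.'' Your argument is exactly that standard adaptation --- nonexpansiveness of $\texttt{Val}$, backward induction identifying the finite-horizon value with the iterated operator, and the summability $\sum_i \|g_i\|_\infty < \infty$ playing the role of geometric discounting to control the tails uniformly over strategy profiles --- so it supplies precisely the omitted details rather than a different route.
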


\red{Recall that a strategy is said to be \emph{Markov} if the players' mixed actions at each stage depend only on the current stage number and on the current state.} An analogue of \red{the above} proposition for $\lambda$-discounted games and
finitely repeated games is well-known, see, for example,
\cite[Theorem~IV.3.2, Proposition~IV.3.3]{MerSorZam15}. The more general case
given here (for a more general total payoff $\sum\nolimits_{i=n}^\infty g_i$) can be proved analogously.

\begin{remark}
\label{rrff3}
(cf. \cite{MerSorZam15}).
We considered the case of a finite game $G$, but analogous proposition holds also in a more general setting, for example, if:
\begin{enumerate}
  \item $I$ and $J$ are finite;
  \item $\Omega$ is a compact metric space;
  \item $g$ is bounded;
  \item For each fixed $n \in \N^*$ and $i \in I$, 
  the function 
  $(j, \omega) \mapsto 
  \int_\Omega f(\omega) P_n(i,j)(\omega, \omega') d \omega'$ 
  is continuous for any 
  bounded continuous function $f$ on $\Omega$, and for each fixed 
  $n \in \N^*$ and $j \in J$, the function 
  $(i, \omega) \mapsto 
  \int_\Omega f(\omega) P_n(i,j)(\omega, \omega') d \omega'$ 
  is continuous for any bounded continuous function $f$ on $\Omega$.
\end{enumerate}
\end{remark}

\subsubsection{Preliminaries: The discretization of a zero-sum continuous-time Markov game}

\label{cc01}
Now, we briefly recall the model from \cite{Sor17}.
As before, $T$ is either a positive number or $+\infty$, and $T_\infty$ is a partition of $[0,T)$;  $h_n = t_{n+1}-t_n$ for each $n \in \N^*$.

We consider a continuous-time Markov game $(\Omega, I, J, g, q)$, in which players are only allowed to act at times $t_1, t_2, t_3, \ldots$ At any time $t \in [t_n, t_{n+1})$ players act according to their decision at time $t_n$. In the above game $\Omega$ is a finite state space, $I$ and $J$ are finite sets of actions of players, $g : I \times J \times \Omega \to \R$ is an instantaneous payoff function, and 
$q : I \times J \to \{|\Omega| \times |\Omega| \text{ matrices } A \text{ such that }a_{i j} \ge 0 \, \forall i \neq j, a_{i i} \le 0 \,\forall i, \text{ and } \sum_{j = 1}^{|\Omega|} a_{i j} = 0\}$ is an infinitesimal generator.

We fix a nonincreasing continuous function $k : [0,T] \to \R_+$ with $\int_{0}^T k(t) dt = 1$. The \emph{discretization of a continuous-time Markov game $(\Omega, I, J, g, q)$, in which player acts at times $t_1, t_2, \ldots$} is the stochastic game
$(\Omega, I, J, \{g_m\}_{m\in\N^*}, \{P_m\}_{m\in\N^*})$ with perfect observation of the state, where $P_m$ is the $m$-th stage transition probability function, and 
$$g_m(i,j,\omega) = \int_{t_m}^{t_{m+1}} k(t) g(i, j, \omega) dt
\quad \text{ and } \quad 
P_m(i,j)(\omega,\omega') = (\exp\{h_m q(i,j)\})(\omega,\omega').$$

Given two strategies $\sigma, \tau$ of players, an initial state $\omega$, an initial time $t_n \in T_\infty$,
the \emph{total payoff} is 
$$G^{\texttt{cont}}_{T_\infty,k}(t_n,\omega) := E_{\sigma,\tau}^{\omega}
\left(\int_{t_n}^{T} k(t) g(i_t,j_t, \omega_t) dt\right),$$ 
\red{where $\omega_t = \omega_{t_m}, i_t = i_{t_m}, j_t = j_{t_m}$ if $t \in [t_m, t_{m+1})$. The transition from $\omega_{t_m}$ to $\omega_{t_{m+1}}$ occurs according to the probability law $P_{m}$ described above.}
The game is said to have a 
\emph{value} $v^{\texttt{cont}}_{T_\infty,k}(t_n,\omega)$ if
$v^{\texttt{cont}}_{T_\infty,k}(t_n,\omega) = 
\sup_{\sigma} \inf_{\tau} G^{\texttt{cont}}_{T_\infty,k}(t_n,\omega) = 
\inf_{\tau} \sup_{\sigma} G^{\texttt{cont}}_{T_\infty,k}(t_n,\omega).$
We can define $G^{\texttt{cont}}_{T_\infty,k}(t,\omega)$ and 
$v^{\texttt{cont}}_{T_\infty,k}(t,\omega)$ for any $t\in [0,T)$ via linearity.
\red{Note that the value exists, because it is the value of a stochastic game with some specific payoff and transitional probability functions.}

The uniform limit
$\lim\limits_{\underset{h_1+h_2+\ldots = T}{\sup_{i\in\N^*} h_i \to 0}} 
v^{\texttt{cont}}_{T_\infty,k}$ is the unique viscosity solution of 
the differential equation
$0 = \frac{d}{dt} v(t,\omega) + \texttt{Val}_{I\times J} [k(t) g(i,j,\omega) + 
\langle q(i,j)(\omega,\cdot)\,, v(t,\cdot)\rangle],$ see \cite[Proposition~4.3]{Sor17}.

\subsubsection{The proof of Theorem~\ref{xmn444}}
The proof given below is a generalization of 
\cite[proofs of Lemma~8.1 and Proposition~8.1]{SorVig16}. 

\begin{proof}
Let $v^{\texttt{cont}}_{T_\infty,k}$ be the value of the discretization of
the continuous-time game
$(\Omega, I, J, g, q)$, where $q$ is an infinitesimal generator. 
Define for $n \in \N^*$
\begin{align*}
  &\psi_n^h : \; C(\Omega, \R) \to C(\Omega, \R),\\
  &f(\omega) \mapsto
  \texttt{Val}_{I \times J} 
  [k(t_n) h_n g(i,j,\omega) + 
  \langle(Id + h_n q(i,j))(\omega,\cdot)\,, f(\cdot)\rangle];\\
  &\overline \psi_n^h : \; C(\Omega, \R) \to C(\Omega, \R),\\
  &f(\omega) \mapsto
  \texttt{Val}_{I \times J} 
  \left[\int_{t_n}^{t_{n+1}} k(t) g(i, j, \omega) dt 
  + \langle \exp\{h_n q(i,j)\}(\omega,\cdot)\,, f(\cdot)\rangle\right].
\end{align*}

First, note that by Proposition~\ref{shapley}(2), for any continuous $f$ and any $n \in \N^*$ we have
\begin{equation}
  \label{eq401}
  \left\|\psi_n^h (f) - \overline \psi_n^h (f)\right\|_\infty \le 
  \left| k(t_n) h_n - \int_{t_n}^{t_{n+1}} k(t) dt\right| 
  \|g\|_\infty + 
  \left\|\left( Id + h_n q \right) - \exp\{h_n q\}\right\|_1 \cdot 
  \|f\|_\infty.
\end{equation}

By the mean value theorem for integrals, there exists $c\in (t_n, t_{n+1})$
such that for any $n \in \N^*$ we have
\begin{equation}
  \label{eq402}
  \frac{1}{h_n} \left| k(t_n) h_n - \int_{t_n}^{t_{n+1}} k(t) dt \right|
  = \left| k(t_n) - k(c) \right| \le
  k(t_n) - k(t_{n+1}).
\end{equation}

There exists $C>0$ such that for any $n \in \N^*$ we have
\begin{equation}
  \label{eq403}
  \frac{1}{h_n} \left\|\left(Id + h_n q\right) - \exp\{h_n q\}\right\|_1
  = \frac{1}{h_n} \left\|\left(Id + h_n q\right) - 
  \sum_{k=0}^\infty \frac{(h_n q)^k}{k !} \right\|_1 \le
  \frac{1}{h_n} C h_n^2 = C h_n.
\end{equation}
\red{(Recall that we consider $\left(Id + h_n q\right) - \exp\{h_n q\}$ as a function of 4 variables).} By combining \eqref{eq401}-\eqref{eq403}, we obtain that there exist 
$C_1 > 0, C_2 > 0$ such that for any continuous $f$ and
any $n \in \N^*$ we have
\begin{equation}
  \label{eq404}
  \frac{1}{h_n} \left\|\psi_n^h (f) - \overline \psi_n^h (f)\right\|_\infty
  \le C_1 \big(k(t_n) - k(t_{n+1})\big) + C_2 h_n \|f\|_\infty.
\end{equation}

By \eqref{eq404} and Proposition~\ref{shapley}(1,3) we have 
\begin{align*}
  \left\|
  v_{T_\infty,k} - 
  v^{\texttt{cont}}_{T_\infty,k} \right\|_\infty =&
  \left\|\prod_{i = 1}^\infty \psi^h_i (0) -
  \prod_{i = 1}^\infty \overline \psi^h_i (0) \right\|_\infty \\
  \le& \left\|\psi^h_1\left(\prod_{i = 2}^\infty \psi^h_i (0)\right) - 
  \overline \psi^h_1 \left(\prod_{i = 2}^\infty \psi^h_i (0)\right)
  \right\|_\infty + 
  \left\|\overline \psi^h_1 \left(\prod_{i = 2}^\infty \psi^h_i (0)\right) - 
  \overline \psi^h_1 \left(\prod_{i = 2}^\infty \overline \psi^h_i (0)\right)
  \right\|_\infty \\
  \le&\left\|\psi^h_1\left(\prod_{i = 2}^\infty \psi^h_i (0)\right) - 
  \overline \psi^h_1 \left(\prod_{i = 2}^\infty \psi^h_i (0)\right)
  \right\|_\infty + 
  \left\|\prod_{i = 2}^\infty \psi^h_i (0) - 
  \prod_{i = 2}^\infty \overline \psi^h_i (0) \right\|_\infty.
\end{align*}

By induction we obtain, for any $N \in \N^*$,
\begin{multline}
  \hspace{-0.65cm}
  \label{fpvv01}
  \left\|v_{T_\infty,k} - 
  v^{\texttt{cont}}_{T_\infty,k} \right\|_\infty \le 
  \sum_{m=1}^N \left(\left\|\psi^h_m
  \left(\prod_{i = m+1}^\infty \psi^h_i (0)\right) - 
  \overline \psi^h_m \left(\prod_{i = m+1}^\infty 
  \psi^h_i (0)\right)\right\|_\infty\right) + 
  \left\|\prod_{i = N+1}^\infty \psi^h_i (0) - 
  \prod_{i = N+1}^\infty \overline \psi^h_i (0) \right\|_\infty.
\end{multline}

Note that by Proposition~\ref{shapley}(3) we have, for any $m \in \N^*$ and any
initial time $t_m \in T_\infty$,
\begin{equation}
  \label{fpvv02}
  v_{T_\infty,k}(t_m,\cdot) = \prod_{i = m}^\infty \psi^h_i (0) \text{ and }
  v^{\texttt{cont}}_{T_\infty,k}(t_m,\cdot) = 
  \prod_{i = m}^\infty \overline \psi^h_i (0).
\end{equation}

Fix $\varepsilon > 0$. There is $S \in (0,T)$ such that, 
for any $t_m \ge S$ and any $\sup_{i\in\N^*} h_i$ small enough, we have
(we use a computation similar to the one in
Remark~\ref{important})
\begin{equation}
  \label{fpvv03}
  \|v_{T_\infty,k}(t_m,\cdot)\|_\infty \le
  \|g\|_\infty \left(\int_S^T k(t) dt + \varepsilon\right) \le
  2 \varepsilon \|g\|_\infty
  \;\; \text{ and } \;\;
  \|v^{\texttt{cont}}_{T_\infty,k}(t_m,\cdot)\|_\infty \le
  2 \varepsilon \|g\|_\infty.
\end{equation}

Also, if $\sup_{i\in\N^*} h_i$ is small enough, then by Remark~\ref{important}
we have for any $t_{m}$
\begin{equation}
  \label{fpvv04}
  \|v_{T_\infty,k}(t_m,\cdot)\|_\infty \le (1+\varepsilon) \|g\|_\infty.
\end{equation}
And analogously
$\|v^{\texttt{cont}}_{T_\infty,k}(t_m,\cdot)\|_\infty \le (1+\varepsilon) \|g\|_\infty.$

Now, let $N(T_\infty) \in \N^*$ be such that 
$t_{N(T_\infty) - 1} < S$ and $t_{N(T_\infty)} \ge S$.
Note that it depends on the partition $T_\infty$ of $[0,T).$
By \eqref{eq404}--\eqref{fpvv04}, there exist 
$C_1 > 0, C_2 > 0$ such that
\begin{align*}
  \left\|
  v_{T_\infty,k} - 
  v^{\texttt{cont}}_{T_\infty,k} \right\|_\infty 
  \le&
  \sum_{m=1}^{N(T_\infty)} \left(\left\|\psi^h_m
  \left(\prod_{i = m+1}^\infty \psi^h_i (0)\right) - 
  \overline \psi^h_m \left(\prod_{i = m+1}^\infty 
  \psi^h_i (0)\right)\right\|_\infty\right) 
  + 4 \varepsilon \|g\|_\infty
  \\ 
  \le& \sup_{i\in\N^*} h_i \cdot \sum_{m=1}^{N(T_\infty)}
  \big(C_1 (k(t_m) - k(t_{m+1})) + (1+\varepsilon) C_2 h_m \|g\|_\infty\big) 
  + 4 \varepsilon \|g\|_\infty \\
  \le&
  \sup_{i\in\N^*} h_i \cdot (C_1 k(0) + 
  (1+\varepsilon) C_2 t_{N(T_\infty)} \|g\|_\infty)
  + 4 \varepsilon \|g\|_\infty
  \xrightarrow{\sup_{i\in\N^*} h_i \to 0} 
  4 \varepsilon \|g\|_\infty 
  \xrightarrow{\varepsilon \to 0} 0.
\end{align*}

Now, the statement of the theorem follows directly from
\cite[Proposition~4.3]{Sor17}.
\end{proof}

\begin{remark}[The case of $T \neq +\infty$]
The above proof can be simplified if we assume that $T \neq +\infty$. 
In this case by 
\eqref{eq404}, \eqref{fpvv01}, \eqref{fpvv02}, \eqref{fpvv04} we have
\begin{align*}
  \left\|
  v_{T_\infty,k} - v^{\texttt{cont}}_{T_\infty,k} 
  \right\|_\infty 
  \le&
  \sum_{m=1}^{+\infty} \left(\left\|\psi^h_m
  \left(\prod_{i = m+1}^\infty \psi^h_i (0)\right) - 
  \overline \psi^h_m \left(\prod_{i = m+1}^\infty 
  \psi^h_i (0)\right)\right\|_\infty\right) \\
  \le& \sup_{i\in\N^*} h_i \cdot \sum_{m=1}^{+\infty}
  \big(C_1 (k(t_m) - k(t_{m+1})) + 
  (1+\varepsilon) C_2 h_m \|g\|_\infty\big) \\
  \le& \sup_{i\in\N^*} h_i \cdot (C_1 k(0) + 
  (1+\varepsilon) C_2 T \|g\|_\infty)
  \xrightarrow{\sup_{i\in\N^*} h_i \to 0} 0.
\end{align*}
\end{remark}





\section{State-blind stochastic games with stage duration}
\label{repeatedGames234}

In this section, we consider stochastic games in which players cannot observe the current state. In \S\ref{repeatedGames}, we recall the definition of state-blind stochastic games, and show that each state-blind stochastic game is equivalent to a stochastic game with perfect observation of the state. In \S\ref{def1}, we give a natural definition of games with stage duration in this framework. In \S\ref{jjjnt44}, we give a new result showing that in the case of state-blind stochastic games, the uniform limit $\lim_{\sup_{i\in\N^*} h_i \to 0} v_{T_\infty,k}$ exists and is the unique viscosity solution of some partial differential equation. The proof of this result is given in \S\ref{thh10}.

\subsection{The model of zero-sum state-blind stochastic games}
\label{repeatedGames}

A zero-sum state-blind stochastic game is played in the same way as a  
stochastic game with perfect observation of the state, but players 
cannot observe the current state. At the beginning of each stage, players
only remember the actions of players at the previous stage and 
the initial probability distribution on the states.

We still assume that the state space $\Omega$ and the action spaces $I, J$ are finite.

We can define the strategies, $\lambda$-discounted and finitely repeated 
games, total payoffs, values, in the same way as in \S\ref{stoch-intro}. 
In particular, a strategy of player 1 is a
collection of functions\\
$(i_1, j_1, i_2, j_2, \ldots, i_{n-1}, j_{n-1})
\mapsto \Delta(I),$
where $i_m \in I$ and $j_m \in J$.
The value is a function of a probability law $p_0$, according to which 
the initial state is chosen. It is known that the value exists if $\Omega, I, J$ are finite, see \cite{Sor02}.

Any state-blind stochastic game is equivalent to 
a stochastic game with perfect observation of the state.
Consider the following construction (cf. \cite[\S1.3]{Zil16}). 

Given a zero-sum state-blind stochastic game 
$G = (\Omega, I, J, \{g_m\}_{m\in\N^*}, \{P_m\}_{m \in \N^*})$, 
we define the stochastic game  
$\Gamma(G) = (\Delta(\Omega), I, J, \{g_m^\gamma\}_{m\in\N^*}, 
\{P_m^\gamma\}_{m \in\N^*})$
with perfect observation of the state. The function 
$g_m^\gamma : I \times J \times \Delta(\Omega) \to \R$ is defined by
$g_m^\gamma (i,j,p) = \sum\nolimits_{\omega \in \Omega} p(\omega) 
g_m(i, j, \omega)$.

Now let us define $\{P_m^\gamma\}_{m \in\N^*}$. If the current stage is $m$ and players have belief $p \in \Delta(\Omega)$ about the current state, then after playing $(i, j) \in I \times J$ their posterior belief that the current state is $\omega'$ is equal to $P_m (i,j)(p,\omega') := \sum\nolimits_{\omega \in \Omega} p(\omega) P_m(i, j)(\omega, \omega')$. The function
$P_m^\gamma : I \times J \to \Delta(\Omega) \times \Delta(\Omega)$
is defined by 
$$P_m^\gamma (i,j)(p,p') = 
\begin{cases}
1  &,\text{if } p'(\omega) = P_m (i, j) (p, \omega)
\text{ for all } \omega \in \Omega; \\
0  &,\text{otherwise}.
\end{cases}$$

Similarly, each strategy $s$ in $G$ has an analogous strategy 
$\Gamma(s)$ in $\Gamma(G)$.


\begin{definition}
A strategy $s$ in a state-blind stochastic game $G$ is said to be 
\emph{Markov} if the strategy
$\Gamma(s)$ in the stochastic game $\Gamma(G)$ with 
perfect observation of the state is Markov.
\end{definition}

\subsection{Zero-sum state-blind stochastic games with stage duration}
\label{def1}

As before, $T$ is either a positive number or $+\infty$, and $T_\infty$ is a partition of $[0,T)$; $h_n = t_{n+1}-t_n$ for each $n \in \N^*$.

\begin{definition}
Fix a zero-sum state-blind stochastic game 
$(\Omega, I, J, g, q)$, 
where $q$ is the kernel. 
The \emph{state-blind stochastic game with $n$-th stage duration $h_n$} 
is the state-blind stochastic game
$$(\Omega, I, J, \{h_m g\}_{m\in\N^*}, \{h_m q\}_{m\in\N^*}).$$
\end{definition}
We can define a total payoff for games with stage duration as in \S\ref{one-h}.

\begin{remark}[Why state-blind case is more difficult than the case of perfect observation of the state?]
If we denote by $T_h$ the Shapley operator of the game in which each stage has duration $h$, then in the case of perfect observation of the state we have
$T_h = h T_1 + (1-h)Id$.
This makes studying of such games relatively easy. In the case of state-blind stochastic games, such an equality no longer holds.
\end{remark}

\subsection{Theorem~\ref{thh1}: in the state-blind case, the uniform limit 
$\lim_{\sup_{i\in\N^*} h_i \to 0} v_{T_\infty,k}$ is the unique
viscosity solution of some differential equation}

\label{jjjnt44}

The following theorem is an analogue of Theorem~\ref{xmn444} for state-blind stochastic games.

\begin{theorem}
\label{thh1}
If $(\Omega, I, J, g, q)$ is a finite state-blind stochastic game, then the uniform limit \\
$\lim\limits_{\underset{h_1+h_2+\ldots = T}{\sup_{i\in\N^*} h_i \to 0}} 
v_{T_\infty,k}(t,p)$ exists and is the unique viscosity solution of the partial differential equation
\begin{equation}
\label{rlp3}
0 = \frac{d}{dt} v(t,p) + \texttt{Val}_{I\times J} [k(t) g(i,j,p) + 
\langle p * q(i,j), \nabla v(t,p) \rangle].
\end{equation}
\end{theorem}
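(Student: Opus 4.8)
The plan is to use the reduction $G \mapsto \Gamma(G)$ of \S\ref{repeatedGames} to turn the state-blind game into a perfect-observation game on the compact belief space $\Delta(\Omega)$, and then to imitate the proof of Theorem~\ref{xmn444}: I compare $v_{T_\infty,k}$ with the value $v^{\texttt{cont}}_{T_\infty,k}$ of the discretization of the \emph{state-blind} continuous-time Markov game, for which \cite{Sor17} already provides convergence to the unique viscosity solution of \eqref{rlp3}. It then suffices to show $\|v_{T_\infty,k} - v^{\texttt{cont}}_{T_\infty,k}\|_\infty \to 0$ as $\sup_i h_i \to 0$; the existence of the limit and its identification with the viscosity solution are inherited from $v^{\texttt{cont}}_{T_\infty,k}$.

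In $\Gamma(G)$ with $n$-th stage duration $h_n$ the transition probability is $Id + h_n q$, so the belief evolves \emph{deterministically} by $p \mapsto p * (Id + h_n q(i,j)) = p + h_n (p * q(i,j))$, and the payoff is the linear function $g^\gamma(i,j,p) = \langle p(\cdot), g(i,j,\cdot) \rangle$. Since the transition is a point mass, the associated Shapley operators read
\[
  \psi_n^h(f)(p) = \texttt{Val}_{I\times J}\big[k(t_n) h_n\, g^\gamma(i,j,p) + f\big(p + h_n (p * q(i,j))\big)\big],
\]
and the reference model (exact flow instead of the Euler step) has operators
\[
  \overline\psi_n^h(f)(p) = \texttt{Val}_{I\times J}\Big[\textstyle\int_{t_n}^{t_{n+1}} k(t)\, g^\gamma(i,j,p)\, dt + f\big(p * \exp\{h_n q(i,j)\}\big)\Big].
\]
As $\Delta(\Omega)$ is compact and $g^\gamma$ is bounded and continuous, the generalized Shapley machinery of Remark~\ref{rrff3} applies, and $v_{T_\infty,k}(t_m,\cdot) = \prod_{i=m}^\infty\psi_i^h(0)$, $v^{\texttt{cont}}_{T_\infty,k}(t_m,\cdot) = \prod_{i=m}^\infty\overline\psi_i^h(0)$.

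The crucial new ingredient — absent from the proof of Theorem~\ref{xmn444} — is a \emph{uniform Lipschitz bound}. Both transitions are point masses at distinct beliefs, so $\|P_n^\gamma - \overline P_n^\gamma\|_1 = 2$ and Proposition~\ref{shapley}(2) alone yields nothing; the closeness of the two target points ($\le C h_n^2$ apart by \eqref{eq403}) can only be exploited through the Lipschitz constant of $f$. I would therefore first show the iterates are uniformly Lipschitz for $\|\cdot\|_1$: for $h_n$ small each row of $Id + h_n q(i,j)$ is nonnegative and sums to $1$, so $p \mapsto p + h_n (p * q(i,j))$ is $\|\cdot\|_1$-nonexpansive, while $p \mapsto g^\gamma(i,j,p)$ is $\|g\|_\infty$-Lipschitz; since $\texttt{Val}$ is nonexpansive, $\psi_n^h$ sends an $L$-Lipschitz function to an $(L + k(t_n) h_n \|g\|_\infty)$-Lipschitz function. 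Summing and using $\sum_i k(t_i) h_i \le 1 + \varepsilon$ (Remark~\ref{important}), all the iterates $\prod_{i=m}^\infty\psi_i^h(0)$ and $\prod_{i=m}^\infty\overline\psi_i^h(0)$ are Lipschitz with a constant $\le (1+\varepsilon)\|g\|_\infty$, uniformly in $T_\infty$. This gives, for $L$-Lipschitz $f$,
\[
  \big\|\psi_n^h(f) - \overline\psi_n^h(f)\big\|_\infty \le \Big|k(t_n) h_n - \textstyle\int_{t_n}^{t_{n+1}} k\, dt\Big|\, \|g\|_\infty + L\, \big\|(Id + h_n q) - \exp\{h_n q\}\big\|_1,
\]
the exact state-blind analogue of \eqref{eq401}.

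With this bound the remainder is a verbatim copy of the proof of Theorem~\ref{xmn444}: the per-stage error is $\le \sup_i h_i \cdot \big(C_1(k(t_n) - k(t_{n+1})) + C_2 (1+\varepsilon)\|g\|_\infty h_n\big)$, the telescoping sum \eqref{fpvv01}--\eqref{fpvv04} collapses to $O(\sup_i h_i)$, and the tail beyond some $t_m \ge S$ is controlled by $\int_S^T k$; hence $\|v_{T_\infty,k} - v^{\texttt{cont}}_{T_\infty,k}\|_\infty \to 0$ and the theorem follows. The main obstacle is precisely the uniform Lipschitz estimate: in Theorem~\ref{xmn444} the finite state space allowed a direct $\|\cdot\|_1$ comparison of transition kernels weighted by $\|f\|_\infty$, whereas the deterministic belief dynamics force a first-order comparison, and one must verify that the Shapley iteration does not inflate the Lipschitz constant — which works only because $Id + h_n q$ is $\ell_1$-nonexpansive and the total weight $\sum_i k(t_i) h_i$ stays bounded.
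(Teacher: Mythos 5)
Your proposal is correct in outline but follows a genuinely different route from the paper. The paper does \emph{not} compare $v_{T_\infty,k}$ with $v^{\texttt{cont}}_{T_\infty,k}$ in the state-blind case; instead it proves directly (Lemma~\ref{lemma_pr}) that the family $\{v_{T_\infty,k}\}_{T_\infty}$ is equibounded and equi-Lipschitz in $(t,p)$ --- using optimal Markov strategies and the $\ell^1$-nonexpansiveness of $p\mapsto p*(Id+h q(x,y))$, essentially the same estimate you use --- then extracts accumulation points by Arzel\`a--Ascoli and verifies, via the Shapley equation \eqref{eq1} and a test-function argument, that every accumulation point is a viscosity solution of \eqref{rlp3}; uniqueness comes from \cite[Proposition~3.9]{Sor17}. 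Notably, the paper contains a remark explicitly stating that the comparison approach you propose was \emph{not} pursued because ``it is not clear how to find an appropriate estimate'' for $f(t_{n+1},\widetilde p_1(i,j))-f(t_{n+1},\widetilde p_2(i,j))$ that is $o(\sup_i h_i)$ per stage. Your answer to that difficulty --- propagate a uniform Lipschitz constant through the Shapley iteration ($\psi_n^h$ maps $L$-Lipschitz functions to $(L+k(t_n)h_n\|g\|_\infty)$-Lipschitz functions, so all iterates are $(1+\varepsilon)\|g\|_\infty$-Lipschitz, whence the per-stage error is $\le h_n(k(t_n)-k(t_{n+1}))\|g\|_\infty + L\,C h_n^2$) --- appears sound to me: the nonexpansiveness of right-multiplication by the row-stochastic matrices $Id+h_nq(i,j)$ and $\exp\{h_nq(i,j)\}$ on $(\Delta(\Omega),\|\cdot\|_1)$, the nonexpansiveness of $\texttt{Val}$, and the $O(h_n^2)$ bound \eqref{eq403} are all available, and the telescoping and tail estimates then go through verbatim as in the proof of Theorem~\ref{xmn444}. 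What each approach buys: yours reduces the state-blind theorem entirely to \cite[Proposition~5.3]{Sor17} and would, if the details are written out carefully, resolve the question the paper's remark leaves open; the paper's approach avoids the comparison altogether and is self-contained modulo the PDE uniqueness result, at the cost of redoing the viscosity-solution verification by hand. The one step of yours that deserves the most careful writing is the uniform Lipschitz bound on the \emph{infinite} products $\prod_{i=m}^\infty\psi_i^h(0)$ (a limit of uniformly Lipschitz functions, so fine) and the check that your $\overline\psi_n^h$ is exactly the Shapley operator of the state-blind discretization of \cite{Sor17} (it is, since in that model the state and actions are frozen on $[t_m,t_{m+1})$ for payoff purposes).
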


For the discounted case the above equation is more simple:

\begin{corollary}
\label{thh3}
If $(\Omega, I, J, 
g, q)$ is a finite state-blind stochastic game, then the uniform limit \\
$\lim\limits_{\underset{h_1+h_2+\ldots = +\infty}{\sup_{i\in\N^*} h_i \to 0}} 
v_{T_\infty,\lambda}(p)$ exists and is the unique viscosity solution of the partial differential equation
$$\lambda v(p) = \texttt{Val}_{I\times J} [\lambda g(i,j,p) + \langle p * q(i,j), \nabla v(p) \rangle].$$
\end{corollary}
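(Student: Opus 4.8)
The plan is to derive the corollary directly from Theorem~\ref{thh1} by specializing the weight to $k(t) = \lambda e^{-\lambda t}$ (with $T = +\infty$) and then eliminating the time variable through an explicit change of the dependent variable. Since $k(t) = \lambda e^{-\lambda t}$ is nonincreasing, continuous, and satisfies $\int_0^{+\infty} k(t)\,dt = 1$, Theorem~\ref{thh1} applies and yields that the uniform limit $v(t,p) := \lim_{\sup_i h_i \to 0} v_{T_\infty,k}(t,p)$ exists and is the unique viscosity solution of
\begin{equation}
\label{cor-time}
0 = \frac{d}{dt} v(t,p) + \texttt{Val}_{I\times J}\left[\lambda e^{-\lambda t} g(i,j,p) + \langle p * q(i,j), \nabla v(t,p)\rangle\right].
\end{equation}

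The key algebraic observation is that $\texttt{Val}_{I\times J}$ is positively homogeneous: for any $c > 0$ one has $\texttt{Val}_{I\times J}[c\,A(i,j)] = c\,\texttt{Val}_{I\times J}[A(i,j)]$, since multiplying a one-shot payoff matrix by a positive constant scales the value of the game by the same constant. I would use this to show that \eqref{cor-time} is invariant under the rescaled time shift: for each fixed $s \ge 0$ a short computation shows that $(t,p) \mapsto e^{\lambda s} v(t+s,p)$ again solves \eqref{cor-time}, the factor $e^{\lambda s}$ passing through $\texttt{Val}$ by homogeneity and converting the weight $e^{-\lambda t}$ into $e^{-\lambda(t+s)}$. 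By the uniqueness part of Theorem~\ref{thh1}, this rescaled shift must coincide with $v$, whence $v(t,p) = e^{-\lambda t} v(0,p)$; in other words, the solution is necessarily of the separated form $v(t,p) = e^{-\lambda t} w(p)$ with $w(p) := v(0,p)$.

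Substituting this separated form into \eqref{cor-time}, using $\frac{d}{dt} v(t,p) = -\lambda e^{-\lambda t} w(p)$ and $\nabla v(t,p) = e^{-\lambda t}\nabla w(p)$ (the gradient taken in $p$), and factoring the positive constant $e^{-\lambda t}$ out of the $\texttt{Val}$ term by homogeneity, I would cancel $e^{-\lambda t}$ throughout to arrive at the autonomous equation
\begin{equation}
\label{cor-auto}
\lambda w(p) = \texttt{Val}_{I\times J}\left[\lambda g(i,j,p) + \langle p * q(i,j), \nabla w(p)\rangle\right].
\end{equation}
Conversely, the same homogeneity computation shows that if $w$ is any viscosity solution of \eqref{cor-auto} then $(t,p)\mapsto e^{-\lambda t} w(p)$ solves \eqref{cor-time}, so uniqueness for \eqref{cor-time} forces uniqueness for \eqref{cor-auto}. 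Finally, since $v(0,p) = \lim_{\sup_i h_i \to 0} v_{T_\infty,\lambda}(p)$ by definition of the discounted value, $w$ is exactly the claimed uniform limit, which completes the argument. The one point requiring care — the main technical obstacle — is that all of these manipulations must be carried out at the level of viscosity solutions rather than classical ones: one must transform each $C^1$ test function by the matching rule ($\phi(p) \mapsto e^{-\lambda t}\phi(p)$, resp. $\psi(t,p) \mapsto e^{\lambda s}\psi(t+s,p)$) and check that the strict local maximum/minimum conditions and the resulting differential inequalities are preserved, which once again reduces to the positive homogeneity of $\texttt{Val}_{I\times J}$.
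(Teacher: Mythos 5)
Your proposal is correct and follows essentially the same route as the paper, whose entire proof of Corollary~\ref{thh3} is the one-line instruction to substitute $v(t,p)\mapsto e^{-\lambda t}v(p)$ into \eqref{rlp3}. You additionally supply the justification the paper leaves implicit --- using positive homogeneity of $\texttt{Val}_{I\times J}$, the time-shift invariance of the equation, and the uniqueness statement to show the limit must have the separated form $e^{-\lambda t}w(p)$ --- which is a sound and welcome filling-in of that gap.
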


\begin{proof}
In \eqref{rlp3}, make a substitution $v(t,p) \mapsto e^{-\lambda t} v(p)$.
\end{proof}

\begin{remark}
	The above theorem is similar to \cite[Proposition~5.3]{Sor17}, which concerns another model of games with stage duration.
\end{remark}

\subsection{The proof of Theorem~\ref{thh1}}
\label{thh10}

Let us first state a technical lemma.

We denote for $x \in \Delta(I), y\in \Delta(J), \omega \in \Omega$
$$
  q(x,y) := \sum_{i \in I, j \in J} x(i) y(j) q(i,j) \qquad \text{ and } \qquad
  g(x,y,\omega) := \sum_{i \in I, j \in J} x(i) y(j) g(i,j,\omega).
$$

\begin{lemma}
  \label{lemma_pr}
The family $\{v_{T_\infty,k}(t,p)\}_{T_\infty}$ 
is equilipschitz-continuous and equibounded for all partitions
$T_\infty$ with $\sup h_i$ small enough, i.e.
there are positive constants
$C_1, C_2, C_3$ and there is $\delta \in (0, 1]$ such that for any 
$t^1, t^2 \in [0, T), p_1, p_2 \in \Delta (\Omega)$ 
and for any partition 
$T_\infty =\{t_n\}_{n\in\N^*}$ with $t_{n+1} - t_n \le \delta$, we have
\begin{align*}
  &|v_{T_\infty,k}(t^1,p_1) - 
  v_{T_\infty,k}(t^2,p_2)| \le
  C_1 \|p_1 - p_2\|_1 + C_2|t^1 - t^2|;\\
  &|v_{T_\infty,k}(t^1, p_1)| \le C_3.
\end{align*}
\end{lemma}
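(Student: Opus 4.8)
The plan is to reduce the state-blind game to a game with perfect observation of the state on the compact belief space $\Delta(\Omega)$, via the construction $\Gamma(G)$ of \S\ref{repeatedGames}, and then to exploit the Shapley recursion of Proposition~\ref{shapley} (applicable on $\Delta(\Omega)$ thanks to Remark~\ref{rrff3}). Writing $v_n(p):=v_{T_\infty,k}(t_n,p)$ and recalling that the $n$-th stage kernel is $h_n q$, so that the posterior belief after the observed actions $(i,j)$ is $p*(Id+h_n q(i,j))$, Proposition~\ref{shapley}(4) yields
\[
v_n(p)=\texttt{Val}_{I\times J}\bigl[h_n k(t_n)\,g(i,j,p)+v_{n+1}\bigl(p*(Id+h_n q(i,j))\bigr)\bigr],
\]
where $g(i,j,p)=\sum_{\omega}p(\omega)g(i,j,\omega)$ is affine in $p$. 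Equiboundedness is then immediate: the total payoff is $\sum_{i\ge n}h_i k(t_i)g_i$, so the Riemann-sum estimate of Remark~\ref{important} gives $|v_n(p)|\le(1+\varepsilon)\|g\|_\infty$ once $\sup_i h_i$ is small enough, which furnishes $C_3$.

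For Lipschitz continuity in $p$, I would first fix $\delta\in(0,1]$ so small that $Id+h_n q(i,j)$ is a row-stochastic matrix for every $(i,j)$ whenever $h_n\le\delta$; this is possible because $q(i,j)$ is a kernel, so one only needs $h_n\max_{i,j,\omega}|q(i,j)(\omega,\omega)|\le 1$. The key structural fact is that for any row-stochastic $M$ and any signed measure $\eta$ one has $\|\eta*M\|_1\le\|\eta\|_1$. Combining this contraction with the nonexpansiveness of $\texttt{Val}_{I\times J}$ (Proposition~\ref{shapley}(1)) and the estimate $|g(i,j,p_1)-g(i,j,p_2)|\le\|g\|_\infty\|p_1-p_2\|_1$, the recursion gives, whenever $v_{n+1}$ is $L_{n+1}$-Lipschitz in $p$,
\[
|v_n(p_1)-v_n(p_2)|\le\bigl(h_n k(t_n)\|g\|_\infty+L_{n+1}\bigr)\|p_1-p_2\|_1 ,
\]
i.e. $L_n\le h_n k(t_n)\|g\|_\infty+L_{n+1}$. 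To close this against the infinite horizon I would apply it to the finite truncations $\psi_n\circ\cdots\circ\psi_N(0)$, which are constant (hence $0$-Lipschitz) beyond stage $N$; the recursion then gives a Lipschitz constant $\le\sum_{i=n}^N h_i k(t_i)\|g\|_\infty\le(1+\varepsilon)\|g\|_\infty$ by Remark~\ref{important}, uniformly in $N$. Passing to the limit via Proposition~\ref{shapley}(3) and using that a uniform limit of uniformly Lipschitz functions is Lipschitz, I obtain $C_1:=(1+\varepsilon)\|g\|_\infty$, uniform over all admissible partitions.

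For Lipschitz continuity in $t$, I would estimate $|v_n(p)-v_{n+1}(p)|$ by comparing the recursion for $v_n$ with the trivial identity $v_{n+1}(p)=\texttt{Val}_{I\times J}[v_{n+1}(p)]$. Nonexpansiveness of $\texttt{Val}$ gives
\[
|v_n(p)-v_{n+1}(p)|\le h_n k(t_n)\|g\|_\infty+\max_{i,j}\bigl|v_{n+1}\bigl(p*(Id+h_n q(i,j))\bigr)-v_{n+1}(p)\bigr| ,
\]
and the second term is at most $C_1\max_{i,j}\|p*(h_n q(i,j))\|_1\le C_1 h_n\max_{i,j}\|p*q(i,j)\|_1$ by the Lipschitz bound just obtained. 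Since a row sum of $q(i,j)$ is zero one checks $\|p*q(i,j)\|_1\le 2\max_{i,j,\omega}|q(i,j)(\omega,\omega)|=:Q$, and $k(t_n)\le k(0)$, so $|v_n(p)-v_{n+1}(p)|\le h_n\bigl(k(0)\|g\|_\infty+C_1 Q\bigr)=:C_2\,|t_{n+1}-t_n|$. As $v_{T_\infty,k}(\cdot,p)$ is the linear interpolation of the $v_n(p)$, every interpolation segment then has slope bounded by $C_2$, so $v_{T_\infty,k}(\cdot,p)$ is globally $C_2$-Lipschitz in $t$; the Lipschitz bound in $p$ survives interpolation as it passes to convex combinations. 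The claimed estimate follows from $|v(t^1,p_1)-v(t^2,p_2)|\le|v(t^1,p_1)-v(t^1,p_2)|+|v(t^1,p_2)-v(t^2,p_2)|$.

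The hard part is the infinite-horizon Lipschitz-in-$p$ estimate: the raw recursion $L_n\le h_n k(t_n)\|g\|_\infty+L_{n+1}$ does not terminate by itself, and one must route it through the finite truncations and Proposition~\ref{shapley}(3) to control the tail uniformly in the partition. The contraction $\|\eta*M\|_1\le\|\eta\|_1$ for stochastic $M$ — which is precisely what forces the restriction to partitions with $\sup_i h_i\le\delta$, so that $Id+h_n q(i,j)$ remains stochastic — is the structural ingredient that makes the whole scheme go through, and it is the state-blind analogue of the contraction used in the perfect-observation setting.
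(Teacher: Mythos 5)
Your proof is correct, and it rests on the same two pillars as the paper's — equiboundedness via the Riemann-sum estimate of Remark~\ref{important}, and the $\ell_1$-contraction $\|(p_1-p_2)*(Id+h_n q(x,y))\|_1\le\|p_1-p_2\|_1$ for the row-stochastic belief transition (inequality \eqref{3788} in the paper) — but it propagates these through the infinite horizon by a genuinely different route. The paper fixes profiles of optimal Markov strategies (Proposition~\ref{shapley}(5) together with Remark~\ref{rrff3}), plays them against each other, and bounds the total payoff difference stage by stage via \eqref{975}--\eqref{985}; for the time variable it then needs a separate argument, representing the drifted belief as a convex combination $\prod_j(1-h_j)p+(1-\prod_j(1-h_j))\overline p$ and invoking Lemma~\ref{lem1}(3) to get $\|p_2^m-p_1^m\|_1\le 2|t_n-t_m|$. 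You instead work entirely at the level of the Shapley recursion: backward induction on the finite truncations $\psi_n\circ\cdots\circ\psi_N(0)$, combined with the nonexpansiveness of $\texttt{Val}$, yields the spatial Lipschitz constant $\sum_{i\ge n}h_i k(t_i)\|g\|_\infty\le 2\|g\|_\infty$ uniformly in $N$ and in the partition, and the time regularity follows from the telescoped one-step estimate $|v_n(p)-v_{n+1}(p)|\le h_n\bigl(k(0)\|g\|_\infty+C_1 Q\bigr)$, which reuses the spatial bound and $\|p*q(i,j)\|_1\le 2\max_{i,j,\omega}|q(i,j)(\omega,\omega)|$. This buys two simplifications: you never need to manipulate optimal strategies or trajectories of beliefs under them, so the convex-combination step and Lemma~\ref{lem1}(3) disappear; and your observation that a piecewise-linear function with uniformly bounded segment slopes is globally Lipschitz replaces the paper's $\alpha,\beta$ case analysis for interpolated times. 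The one step worth stating explicitly is the passage of the Lipschitz constants from the truncations to $v_n$ itself, which is licensed by the convergence $\psi_n\circ\cdots\circ\psi_N(0)\to v_n$ in sup norm from Proposition~\ref{shapley}(3) — you do invoke exactly this, so the argument is complete.
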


Lemma~\ref{lemma_pr} is an analogue of \cite[Proposition~3.11]{Sor17}, while the deduction of Theorem~\ref{thh1} from it is an analogue of \cite[Proposition~3.12]{Sor17}. The proof of Lemma~\ref{lemma_pr} is different from the proof of its analogue, while the deduction of Theorem~\ref{thh1} from it is almost identical to \cite[proof of Proposition~3.12]{Sor17}.

\begin{proof}[Proof of Lemma~\ref{lemma_pr}]
First, we prove equiboundedness. 
By Remark~\ref{important} (with $\varepsilon = 1$) 
we have for $\sup h_i$ small enough
$$|v_{T_\infty,k}(t^1, p_1)| \le 
|v_{T_\infty,k}(0, p_1)| \le
\|g\|_\infty \sum_{j=1}^\infty h_j k(t_j) 
\le 
2 \|g\|_\infty.$$
The rest of the proof is devoted to equilipschitz-continuity.

For $x \in \Delta(I), y\in \Delta(J)$, denote 
$P^h(x,y) := Id + h q(x,y).$
For all
$p \in \Delta(\Omega), \omega \in \Omega, x \in \Delta(I), y \in \Delta(J)$,
denote
$\widehat p^h(x,y)(\omega) := (p * P^h(x,y)) (\omega).$
If 
\begin{itemize}
\item For some $n \in \N^*$ we have $t_{n+1} - t_n = h$;
\item At the $n$-th stage players play a (mixed) action profile $(x,y)$;
\item The distribution of states at the start of the $n$-th stage is $p$,
\end{itemize}
then $\widehat p^h(x,y)$ is the distribution of states at the start of 
$(n+1)$-th stage. 

We have for all $h \in (0,1], x \in \Delta(I), y \in \Delta(J)$
$$\left\|\widehat p_1^h(x,y) - \widehat p_2^h(x,y) \right\|_1 \le 
\left\| (p_1 - p_2) * P^h(x,y) \right\|_1 = 
\left\| (P^h(x,y))^T \cdot (p_1 - p_2) \right\|_1,$$
where $p_1 - p_2$ is a vector column, and we have
$$\left\| (P^h(x,y))^T \cdot (p_1 - p_2) \right\|_1 \le
\|(P^h(x,y))^T\|_{\texttt{op}} \|p_1 - p_2\|_1,$$
where
$\|(P^h(x,y))^T\|_{\texttt{op}}$
is the operator norm of the operator $z \mapsto (P^h(x,y))^T \cdot z,$ i.e.
$$\left\|(P^h(x,y))^T\right\|_{\texttt{op}} = \sup_{z : \|z\|_1 = 1}
\left\|(P^h(x,y))^T \cdot z\right\|_1,$$
and since
$\left\|(P^h(x,y))^T \right\|_{\texttt{op}} = 1,$
we have
\begin{equation}
\left\|\widehat p_1^h(x,y) - \widehat p_2^h(x,y) \right\|_1 \le 
\left\| p_1 - p_2 \right\|_1.
\label{3788}
\end{equation}
Fix $t_n \in T_\infty$, and $p_1, p_2 \in \Delta(\Omega)$. 
By Proposition~\ref{shapley}(5) and Remark~\ref{rrff3}
there exists\footnote{Here, we consider the state-blind stochastic game as a stochastic game with perfect observation of the state, on the compact state space $\Delta(\Omega)$.} a profile of optimal Markov strategies $(\sigma_1, \tau_1)$ 
(respectively there exists a profile of optimal Markov strategies 
$(\sigma_2, \tau_2)$), 
if initial time is $t_n$ and initial distribution of states is $p_1$ 
(respectively $p_2$). For a strategy 
$(\sigma_i, \tau_i) \; (i = 1, 2)$, denote by
$(x_i^j,y_i^j)$ the profile of mixed actions played at the $j$-th stage 
(it depends on the $j$-th stage distribution of states $p_i^j$ and on the stage number $j$). 
We have
\begin{equation}
\left|v_{T_\infty,k}(t_n, p_1) - 
v_{T_\infty,k}(t_n, p_2) \right| \le  
\sum_{j = n}^\infty h_j k(t_j) 
\left|g(x_1^j,y_1^j,p_1^j) - g(x_2^j,y_2^j,p_2^j)\right|,
\label{975}
\end{equation}
where $p_i^{n} = p_i$, and $p^{j+1}_i = \widehat{p^j_i}^{h_j} (x^j_i,y^j_i)$ (for $i=1,2$ and $j \ge n$).
We have for any $j$ (assuming without loss of generality assume that
$g(x_1^j,y_1^j,p_1^j) \ge g(x_2^j,y_2^j,p_2^j)$)
\begin{align}
\hspace{-0.3cm}
\begin{split}
  \left|g(x_1^j,y_1^j,p_1^j) - g(x_2^j,y_2^j,p_2^j)\right| \le 
  &\left|g(x_1^j,y_2^j,p_1^j) - g(x_1^j,y_2^j,p_2^j)\right|
  = \left|\sum_{\omega \in \Omega} p_{1}^j(\omega ) g(x_1^j,y_2^j,\omega ) -
  \sum_{\omega  \in \Omega} p_{2}^j(\omega) g(x_1^j,y_2^j,\omega)\right| \\
  \le& \left\|g\right\|_\infty \sum_{\omega \in \Omega} 
  \left|p_{1}^j(\omega) - p_{2}^j(\omega)\right| 
  = \left\|g\right\|_\infty \left\|p_{1}^j - p_{2}^j\right\|_1
  \le \left\|g\right\|_\infty \left\|p_{1} - p_{2}\right\|_1,
\end{split}
\label{985}
\end{align}
where the last inequality follows from \eqref{3788}. 
By Remark~\ref{important} (with $\varepsilon = 1$) and by 
combining \eqref{975} and \eqref{985}, we obtain
\vspace{-0.4cm}
\begin{equation}
\left|v_{T_\infty,k}(t_n, p_1) - 
v_{T_\infty,k}(t_n, p_2) \right| \le  
\left\|g\right\|_\infty \left\|p_{1} - p_{2}\right\|_1 
\sum_{j = n}^\infty h_j k(t_j) \le 
2\left\|g\right\|_\infty \left\|p_{1} - p_{2}\right\|_1,
\label{rs1}
\end{equation}
where the second inequality holds if \red{$\sup h_i$} is small enough.

Fix $p \in \Delta(\Omega)$, and $t_n, t_m \in T_\infty$ with 
$t_m \ge t_n$. Let $(\sigma_1, \tau_1)$ (respectively $(\sigma_2, \tau_2)$) 
be a profile of optimal Markov strategies, if starting time is $t_n$ 
(respectively $t_m$) and initial distribution of states is $p$. 
For a strategy $(\sigma_i, \tau_i)$, denote by
$(x_i^j,y_i^j)$ the profile of mixed actions played at the $j$-th stage 
(it depends on the current distribution $p_i^j$ and on the stage number $j$). We have
\begin{align}
\begin{split}
\left|v_{T_\infty,k}(t_n, p) - 
v_{T_\infty,k}(t_m, p) \right| &\le  
\sum_{j = n}^{m-1} h_j k(t_j) 
\left|g(x_1^j,y_1^j,p_1^j)\right|
+ \sum_{j = m}^\infty h_j k(t_j) 
\left|g(x_1^j,y_1^j, p_1^j) - g(x_2^j,y_2^j,p_2^j)\right| \\
&\le \|k\|_\infty \|g\|_\infty |t_n - t_m|
+\sum_{j = m}^\infty h_j k(t_j)
\left|g(x_1^j,y_1^j, p_1^j) - g(x_2^j,y_2^j,p_2^j)\right|,
\end{split}
\label{9759}
\end{align}
where $p_1^n = p_2^m = p$ and 
$p^{j+1}_i = \widehat{p^j_i}^{h_j} (x^j_i,y^j_i)$ 
(for $i = 1, j \ge n$ or $i = 2, j \ge m$). 
There exists $\overline p \in \Delta (\Omega)$ such that
\begin{align}
\begin{split}
\|p_2^m - p_1^m\|_1 &= \left\|p - 
\left(\prod_{j=n}^{m-1}(1-h_j) p +
\left(1 - \prod_{j=n}^{m-1}(1-h_j)\right)\overline p\right)\right\|_1.
\end{split}
\label{97599}
\end{align}

By Lemma~\ref{lem1}(3) below we have
\begin{align}
  \begin{split}
  \left\|p - \left(\prod_{j=n}^{m-1}(1-h_j) p +
  \left(1 - \prod_{j=n}^{m-1}(1-h_j)\right)\overline p\right)\right\|_1 &\le
  \left\|\left(1 - \prod_{j=n}^{m-1}(1-h_j)\right)p \right\|_1
  + \left\|\left(1 - \prod_{j=n}^{m-1}(1-h_j)\right) \overline p \right\|_1 \\
  &\le \left\|\left(1 - \left(1-\sum_{j=n}^{m-1}h_j\right) \right)p \right\|_1 
  + \left\|\left(1 - \left(1-\sum_{j=n}^{m-1}h_j\right)\right) 
  \overline p \right\|_1 \\
  &\le \left|t_n -t_m \right| (\|p\|_1+\|\overline p\|_1) = 2 
  \left|t_n - t_m\right|.
  \end{split}
  \label{97599f4}
\end{align}
  
By combining \eqref{97599} and \eqref{97599f4}, we have
\begin{equation}
  \|p_2^m - p_1^m\|_1 \le 2 \left|t_n - t_m\right|.
  \label{97599g55}
\end{equation}

By Remark~\ref{important} (with $\varepsilon = 1$) and by 
combining \eqref{3788}, \eqref{9759}, and \eqref{97599g55}, we have
\begin{align}
\begin{split}
\left|v_{T_\infty,k}(t_n, p) - 
v_{T_\infty,k}(t_m, p) \right| &\le 
\|k\|_\infty \|g\|_\infty |t_n - t_m| + 
2 \left|t_n-t_m\right| 
\sum\nolimits_{j = n}^\infty h_j k(t_j) \\
&\le
\|k\|_\infty \|g\|_\infty |t_n - t_m| + 4 |t_n - t_m|\\
&=  
(\|k\|_\infty \|g\|_\infty+4) |t_n - t_m|.
\end{split}
\label{rs2}
\end{align}

By combining \eqref{rs1} and \eqref{rs2}, we have for any 
$p_1,p_2 \in \Delta(\Omega)$, $t^1 = t_n\in T_\infty$, and 
$t^2 = t_m\in T_\infty$
\begin{align}
\begin{split}
|v_{T_\infty,k}(t^1, p_1) - 
v_{T_\infty,k}(t^2, p_2) | &\le
|v_{T_\infty,k}(t^1, p_1) - 
v_{T_\infty,k}(t^1, p_2) | \\
&+
|v_{T_\infty,k}(t^1, p_2) - 
v_{T_\infty,k}(t^2, p_2)| \\
&\le
2 \left\|g\right\|_\infty \left\|p_{1} - p_{2}\right\|_1 + 
(\|k\|_\infty \|g\|_\infty+4) |t^1 - t^2|.
\end{split}
\label{a74999}
\end{align}

Now, we prove that this inequality holds for any $t^1,t^2 \in [0,T)$. Denote
$C_1 := 2 \left\|g\right\|_\infty$ and 
$C_2 := \|k\|_\infty \|g\|_\infty+4.$
Without loss of generality, assume that $t^1 \ge t^2$. 
By the definition of $v_{T_\infty,\lambda}$ there exist integers $n, m$ 
and numbers 
$\alpha \in [0,1], \beta \in [0,1]$ such that for any 
$p \in \Delta(\Omega)$ we have
\begin{align*}
v_{T_\infty,k}(t^1, p) &= 
\alpha v_{T_\infty,k}(t_n, p) + 
(1 - \alpha) v_{T_\infty,k}(t_{n+1}, p) \;\text{ and }\\
v_{T_\infty,k}(t^2, p) &= 
\beta v_{T_\infty,k}(t_m, p) + 
(1 - \beta) v_{T_\infty,k}(t_{m+1}, p).
\end{align*}
We have \\
\begin{align*}
|v_{T_\infty,k}(t^1, p_1) - 
v_{T_\infty,k}(t^1, p_2) | 
&\le \alpha |v_{T_\infty,k}(t_n, p_1) - 
v_{T_\infty,k}(t_n, p_2) | 
+(1-\alpha) |v_{T_\infty,k}(t_{n+1}, p_1) - 
v_{T_\infty,k}(t_{n+1}, p_2)| \\
&\le C_1\|p_1 - p_2\|_1.
\end{align*}
If $\beta \ge \alpha$, then 
$t_{n+1} > t_{n} \ge t_{m+1} > t_{m}$, and we have
\begin{align*}
&|v_{T_\infty,k}(t^1, p_2) - 
v_{T_\infty,k}(t^2, p_2)| 
\le \alpha |v_{T_\infty,k}(t_n, p_{2}) - 
v_{T_\infty,k}(t_m, p_2)| \\
+&(\beta-\alpha) |v_{T_\infty,k}(t_{n+1}, p_{2}) - 
v_{T_\infty,k}(t_{m}, p_2)|
+ (1-\beta) |v_{T_\infty,k}(t_{n+1}, p_{2}) - 
v_{T_\infty,k}(t_{m+1}, p_2)| \\
\le& \alpha(C_2|t_m-t_n|) +
(\beta-\alpha)(C_2|t_{m}-t_{n+1}|)
+ (1-\beta) (C_2|t_{m+1}-t_{n+1}|) \\
=& C_2|\alpha (t_{n} - t_{m}) + (\beta - \alpha)(t_{n+1} - t_{m})
+ (1 - \beta) (t_{n+1} - t_{m+1})| \\
=&C_2|\beta t_m + (1-\beta)t_{m+1} - 
(\alpha t_n +(1-\alpha)t_{n+1})| 
= C_2|t^2 - t^1|.
\end{align*}

If $\alpha \ge \beta$, then $t_{n+1} \ge t_{m+1}$ and $t_{n} \ge t_{m}$, and a computation similar to the above one shows that
$|v_{T_\infty,k}(t^1, p_2) - v_{T_\infty,k}(t^2, p_2)| \le C_2|t^2 - t^1|.$
Thus we proved \eqref{a74999} for any $t^1,t^2 \in [0,T)$, and hence we 
proved the lemma.
\end{proof}

Denote
$$U := \left\{\text{accumulation points of uniform (in both } 
p \text{ and } t 
\text{) limit }
\lim\limits_{\underset{h_1+h_2+\ldots = T}{\sup_{i\in\N^*} h_i \to 0}} 
v_{T_\infty,k}(t,p)\right\}.$$

\begin{proof}[Proof of Theorem~\ref{thh1}]
Fix a partition $T_\infty$. By Proposition~\ref{shapley}(4) we have
for any $t_n \in T_\infty$
\begin{equation}
\label{eq1}
v_{T_\infty,k}(t_n,p) = \texttt{Val}_{I\times J} 
\left[h_n k(t_n) g(i,j,p) + v_{T_\infty,k}(t_{n+1},
\widetilde p(i,j))\right],
\end{equation}
where $\widetilde p(i_n,j_n) = p*(Id + h_n q(i,j))$. 

By Lemma~\ref{lemma_pr} and the Arzelà–Ascoli theorem we have $U\neq \emptyset$.
We are going to prove that any $\mathcal U \in U$
is a viscosity solution of the partial differential equation
$$0 = \frac{d}{dt} V(t,p) + \texttt{Val}_{I \times J} 
[k(t) g(i,j,p) + \langle p * q(i,j), \nabla V(t,p) \rangle].$$

Let $\psi(t,p)$ be a $\mathcal C^1$ function such that $\mathcal{U} - \psi$ 
has a strict local maximum at 
$(\overline t, \overline p) \in [0,T) \times \Delta(\Omega)$. 
Consider a sequence of partitions $\{T_\infty(m)\}_{m \in \N^*}$ 
such that in the partition $T_\infty(m)$ we have 
$\sup_{i\in\N^*} h_i \to 0$ as $m \to \infty$, and such that a sequence 
$W_m = v_{T_\infty(m),\lambda}$ converging uniformly
to $\mathcal U$ as $m \to \infty$, and let 
$(t^*(m), p(m))$ be a maximizing (locally near $(\overline t, \overline p)$) 
sequence for $(W_m - \psi)(t,p)$, where $t^*(m) \in T_\infty(m)$. 
In particular, $(t^*(m), p(m))$ converges to $(\overline t, \overline p)$ as 
$m \to \infty.$ Given an optimal in \eqref{eq1} mixed strategy 
$x^*(m) \in \Delta(I)$, 
one has with 
$t^*(m)=t_n \in T_\infty(m)$
$$W_m(t_n, p(m)) \le E_{x^*(m),y}
\left[h_n k(t_n) g(i,j,p(m)) + W_m(t_{n+1},\widetilde p
(i,j))\right], \; \forall y \in \Delta(J).
$$

For $m$ large enough, the choice of $(t^*(m),p(m))$ implies
$$\psi(t_n,p(m)) - W_m(t_n, p(m)) \le 
\psi(t_{n+1},\widetilde p(i,j)) - W_m(t_{n+1},\widetilde p
(i,j)), \; \forall i \in I, j \in J.$$

By using the continuity of $k$ and $\psi$ being $\mathcal C^1$, 
and the Taylor's theorem for $\psi(t_{n+1},\cdot)$, one obtains 
for all $y \in \Delta(J)$
\begin{align*}
\psi(t_n,p(m)) &\le E_{x^*(m),y} \left[h_n k(t_n) g(i,j,p(m)) + 
\psi(t_{n+1},\widetilde p(i,j))\right] \\
&= E_{x^*(m),y} \left[h_n k(t_n) g(i,j,p(m)) + 
\psi(t_{n+1},p(m) * (Id + h_n q(i,j)))\right] \\
&\le h_n k(t_n) g(x^*(m),y,p(m)) + \psi(t_{n+1},p(m)) \\
&+ h_n E_{x^*(m),y} \langle p(m) * q(i,j), 
\nabla \psi(t_{n+1},p(m)) \rangle + o(h_n),
\end{align*}
where $o(h_n) / h_n \to 0$ when $h_n \to 0$.
This gives for all $y \in \Delta(J)$\\
\begin{multline*}
  0 \le h_n \frac{\psi(t_{n+1},p(m)) - \psi(t_{n},p(m))}{h_n} 
  + h_n k(t_n) g(x^*(m),y,p(m)) + \\
  h_n E_{x^*(m),y} \langle p(m) * q(i,j), 
  \nabla \psi(t_{n+1},p(m)) \rangle + o(h_n).
\end{multline*}

Hence by dividing by $h_n$ and taking the limit as $m \to \infty$, 
one obtains, for some accumulation point $x^* \in \Delta(I)$ 
(we use again the continuity of $k$ and 
$\psi$ being $\mathcal C^1$)
$$0 \le \frac{d}{dt} \psi(\overline t,\overline p) + 
k(\overline t) g(x^*,y,\overline p) + 
E_{x^*,y} \langle \overline p * q(i,j), 
\nabla \psi(\overline t,\overline p) \rangle + o(h_n) \quad \forall y \in 
\Delta(J).$$

Analogously one can prove that if $\psi(t,p)$ is a $\mathcal C^1$ function 
such that $\mathcal{U} - \psi$ has a strict local minimum at 
$(\overline t, \overline p) \in [0,T) \times \Delta(\Omega)$, and 
$y^* \in \Delta(J)$ is optimal in \eqref{eq1}, then
$$0 \ge \frac{d}{dt} \psi(\overline t,\overline p) + 
k(\overline t) g(x,y^*,\overline p) + 
E_{x,y^*} \langle \overline p * q(i,j), 
\nabla \psi(\overline t,\overline p) \rangle + o(h_n) \quad \forall x \in 
\Delta(I).$$

Thus $\mathcal U$ is a viscosity solution of 
$0 = \frac{d}{dt} V(t,p) + \texttt{Val}_{I \times J} 
[k(t) g(i,j,p) + \langle p * q(i,j), \nabla V(t,p) \rangle].$
The uniqueness follows from \cite[Proposition~3.9]{Sor17}.
\end{proof}

\begin{remark}
In the proof of Theorem~\ref{xmn444}, we used the Shapley equation to prove that if $\sup h_i$ is small, then the value $v_{T_\infty,k}$ of a stochastic game with stage duration is close to the value $v^{\texttt{cont}}_{T_\infty,k}$ of the discretization of a continuous-time Markov game, which is known to converge when $\sup h_i \to 0$. The analogous was not done in the proof of Theorem~\ref{thh1}, because for state-blind stochastic games the Shapley equation has a much more complicated structure. Namely, in this case instead of estimating the difference
$\langle(Id + h_n q(i,j))(\omega,\cdot)\,, f(\cdot)\rangle - 
\langle(\exp{\{h_n q(i,j)\}})(\omega,\cdot)\,, f(\cdot)\rangle,$
we need to estimate the difference
$f(t_{n+1},
\widetilde p_1(i,j)) - f(t_{n+1},
\widetilde p_2(i,j)),$
where 
$\widetilde p_1(i,j) = p*(Id + h_n q(i,j))$ and
$\widetilde p_2(i,j) = p* \exp{\{h_n q(i,j)\}}$.
It is not clear how to find an appropriate estimate (which is small in comparison with $\sup h_i$).
\end{remark}

\section{Discounted stochastic games with stage duration}
\label{disc}

When considering discounted stochastic games with stage duration, we always assume that $T=+\infty$, so that $T_\infty$ is a partition of $\R_+$.

In this section, we consider in more details the discounted case. In previous sections, we said that in a discounted game with $n$-th stage duration $h_n$, the total payoff is 
$\lambda \sum\nolimits_{i = 1}^\infty \exp\{-\lambda t_i\} h_i g_i$. However, previous articles \cite{Ney13} and \cite{SorVig16} about games with stage duration considered the total payoff 
$\lambda \sum\limits_{i = 1}^\infty (\prod\limits_{j = 1}^{i-1} (1-\lambda h_j) ) h_i g_i$.
We did not use this total payoff because it is not a particular case of Definition~\ref{ty3ee}. This section shows that if $\sup h_i \to 0$, then these two total payoffs are essentially equivalent.

For each $h\in(0,1]$, consider a discount factor $\alpha_h \in [0, 1)$.
We want to impose some natural condition on $\alpha_h$, which will allow us to
study the value when $\sup_{i} h_i$ tends to $0$. 
Such a condition is given by the following definition.

\begin{definition}(cf. \cite[p. 240]{Ney13}).
A family of $h$-dependent discount factors $\alpha_h$ is called 
\emph{admissible} if 
$\displaystyle{\lim\limits_{h \to 0+} \frac{\alpha_h}{h}}$ exists. 
The limit is called the \emph{asymptotic discount rate}.
\end{definition}

\begin{example}[Families of admissible discount factors] \
\begin{enumerate}
\item $\alpha_h = 1 - e^{- \lambda h}$ (Family with asymptotic discount 
rate $\lambda \in (0,+\infty)$);
\item $\alpha_h = \lambda h$ (Family with asymptotic discount 
rate $\lambda \in (0,1]$);
\item $\alpha_h = 
\begin{cases}
0, &\text{if } h > 1/\lambda; \\
\lambda h, &\text{if } h \le 1/\lambda.
\end{cases}$ (Family with asymptotic discount rate $\lambda \in (0,+\infty)$).
\end{enumerate}
\end{example}

Now, we define discounted games with stage duration.

\begin{definition}[$\lambda$-discounted stochastic game with stage duration]
	Let $\alpha_h$ be an admissible family of discount factors with 
	asymptotic discount rate $\lambda$. \\
  The \emph{$\lambda$-discounted stochastic game with $n$-th stage duration $h_n$} 
	is the stochastic game with payoff
	$\sum\limits_{i = 1}^\infty \left(\prod\limits_{j = 1}^{i-1} 
	(1-\alpha_{h_j}) \right) h_i g_i.$
	Denote by $V^\alpha_{T_\infty,\lambda}$ the value of such a game, and denote by\\
	$v^\alpha_{T_\infty,\lambda} = \left.V^\alpha_{T_\infty,\lambda} \middle/
	\sum\limits_{i = 1}^\infty \left(\prod\limits_{j = 1}^{i-1} 
	(1-\alpha_{h_j}) \right) h_i\right.$ 
	its normalization.
\end{definition}


\begin{remark}
	\label{stth3}
	In the above definition:
	\begin{enumerate}
		\item If $\alpha_h = 1 - e^{-\lambda h}$, then
		$\sum\limits_{i = 1}^\infty \left(\prod\limits_{j = 1}^{i-1} 
		(1-\alpha_{h_j}) \right) h_i g_i = 
		\sum\limits_{i = 1}^\infty \left(\prod\limits_{j = 1}^{i-1} 
		\exp\{-\lambda h_j\} \right) h_i g_i = \sum\limits_{i = 1}^\infty 
		\exp\{-\lambda t_i\} h_i g_i;$
		\item If $\alpha_h = \lambda h$, then
		$\sum\limits_{i = 1}^\infty \left(\prod\limits_{j = 1}^{i-1} 
		(1-\alpha_{h_j}) \right) h_i g_i = 
		\sum\limits_{i = 1}^\infty \left(\prod\limits_{j = 1}^{i-1} 
		(1-\lambda h_j) \right) h_i g_i$, and 
		$v_{T_\infty,\lambda} = \lambda V_{T_\infty,\lambda}$ 
		(it follows from the fact that
		$\sum\limits_{i = 1}^\infty \left(\prod\limits_{j = 1}^{i-1} 
		(1-\lambda h_j) \right) h_i = \frac{1}{\lambda}$,
		see Lemma~\ref{lem1}(2) for a proof).
	\end{enumerate}
\end{remark}

\begin{lemma}
\label{admissible2}
	Fix $\lambda \in (0, +\infty)$. Consider two admissible families $\alpha_h$ 
	and $\beta_h$ of discount factors with asymptotic discount rate $\lambda$. 
	Then for all families (parametrized by partitions $T_\infty$ of $\R_+$) of 
	streams $x (T_\infty) = (g_1, g_2, \ldots)$ with 
	$|g_i| \le C h_i \: (i \in \N^*)$, the difference 
	$\sum\limits_{i = 1}^\infty \left(\prod\limits_{j = 1}^{i-1} 
	(1-\alpha_j) \right) g_i - \sum\limits_{i = 1}^\infty 
	\left(\prod\limits_{j = 1}^{i-1} (1-\beta_j) \right) g_i$
	tends to $0$ as $\sup\nolimits_{i\in\N} h_i$ tends to $0$.
\end{lemma}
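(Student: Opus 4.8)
The plan is to reduce the claim to a single estimate that does not involve the stream at all. Writing $A_i := \prod_{j=1}^{i-1}(1-\alpha_{h_j})$ and $B_i := \prod_{j=1}^{i-1}(1-\beta_{h_j})$ (where $\alpha_{h_j}, \beta_{h_j}$ are the discount factors associated to the $j$-th stage duration $h_j$), the difference in the lemma equals $\sum_{i=1}^\infty (A_i - B_i) g_i$, and since $|g_i| \le C h_i$ it is bounded in absolute value by $C \sum_{i=1}^\infty |A_i - B_i| h_i$. Thus it suffices to show that
$$S(T_\infty) := \sum\nolimits_{i=1}^\infty |A_i - B_i| h_i \to 0 \quad \text{as } \sup\nolimits_i h_i \to 0.$$
Since $S(T_\infty)$ is independent of the stream, this automatically yields the uniform statement required. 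Observe also that every partial product of the factors $1-\alpha_{h_j},\, 1-\beta_{h_j} \in (0,1]$ lies in $(0,1]$.

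To estimate $|A_i - B_i|$ I would use the standard telescoping identity for a difference of products,
$$A_i - B_i = -\sum_{k=1}^{i-1}(\alpha_{h_k}-\beta_{h_k})\Big(\prod\nolimits_{j<k}(1-\alpha_{h_j})\Big)\Big(\prod\nolimits_{k<j\le i-1}(1-\beta_{h_j})\Big),$$
and then exploit admissibility twice. First, since $\alpha_h/h, \beta_h/h \to \lambda$, for any $\eta \in (0,\lambda)$ there is $\delta>0$ so that $\sup_j h_j < \delta$ forces both $|\alpha_{h_k}-\beta_{h_k}| \le \eta h_k$ and $\alpha_{h_j},\beta_{h_j}\ge (\lambda-\eta)h_j$ for all indices. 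The lower bound, together with $1-x\le e^{-x}$, makes each product of remaining factors at most $\exp(-(\lambda-\eta)(t_i-h_k)) \le 2\exp(-(\lambda-\eta)t_i)$ (using $t_i=\sum_{j<i}h_j$ and shrinking $\delta$ so that $e^{(\lambda-\eta)\delta}\le 2$). Combining these gives $|A_i-B_i|\le 2\eta\, e^{-(\lambda-\eta)t_i}\sum_{k<i}h_k = 2\eta\, t_i\, e^{-(\lambda-\eta)t_i}$.

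Finally I would sum against $h_i$: $S(T_\infty) \le 2\eta \sum_{i=1}^\infty t_i\, e^{-(\lambda-\eta)t_i} h_i$. The key point is that $t\mapsto t\, e^{-(\lambda-\eta)t}$ is integrable on $[0,\infty)$, so the right-hand sum is a Riemann sum that stays bounded by a constant $M$ uniform in the partition. Concretely, bounding $t\, e^{-(\lambda-\eta)t}\le \tfrac{2}{(\lambda-\eta)e}\, e^{-\mu t}$ with $\mu=(\lambda-\eta)/2$ reduces it to the left Riemann sum of a decreasing integrable function, which is at most its integral plus the mesh. Hence $S(T_\infty)\le 2\eta M$, and since $\eta$ may be made arbitrarily small (by taking $\sup_i h_i$ small enough) while $M$ remains bounded for $\eta<\lambda/2$, we conclude $S(T_\infty)\to 0$.

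The main obstacle — and the reason $\lambda>0$ is indispensable — is that the crude bound $|A_i-B_i|\le \sum_{k<i}|\alpha_{h_k}-\beta_{h_k}|\le \eta\, t_i$ is useless, since $\sum_i t_i h_i = +\infty$; one must retain the exponential decay factor $e^{-(\lambda-\eta)t_i}$ coming from the discounting in order to make the series converge. Carrying that decay through the telescoping sum, rather than discarding it, is the crux, and it is precisely what the lower bound $\alpha_{h_j},\beta_{h_j}\ge(\lambda-\eta)h_j$ supplies.
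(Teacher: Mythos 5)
Your proof is correct, but it takes a genuinely different route from the paper's. The paper never telescopes the difference of products: writing $1-\alpha_h=1-\lambda h+m(h)$ and $1-\beta_h=1-\lambda h+n(h)$ with $m(h)/h,\,n(h)/h\to 0$, it notes that $\sum_i|A_i-B_i|h_i=\sum_i\max\{A_i,B_i\}h_i-\sum_i\min\{A_i,B_i\}h_i$, and that each of these two sums has the form $\sum_i\bigl(\prod_{j<i}(1-\lambda h_j+k_i(h_j))\bigr)h_i$ with $k_i(h_j)\in\{m(h_j),n(h_j)\}$; the exact identity $\sum_i\bigl(\prod_{j<i}(1-\mu h_j)\bigr)h_i=1/\mu$ of Lemma~\ref{lem1}(2) then squeezes each of them between $1/(\lambda+\sup_j\{\cdot\})$ and $1/(\lambda-\sup_j\{\cdot\})$, so both tend to $1/\lambda$ and their difference vanishes. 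You instead expand $A_i-B_i$ by the product-difference identity, retain the discount decay $e^{-(\lambda-\eta)t_i}$, and close with a Riemann-sum bound for $t\mapsto t\,e^{-(\lambda-\eta)t}$. Both arguments are sound: the paper's squeeze is shorter and needs no integrability discussion, while yours is more quantitative, producing an explicit modulus of convergence of order $\sup_h|\alpha_h-\beta_h|/h$ over the relevant range of $h$, and it correctly isolates why $\lambda>0$ is essential. Two small points to tighten in your write-up: the left Riemann sum of a nonincreasing integrable $f\ge 0$ is at most $\int_0^\infty f+f(0)\sup_i h_i$ (not merely ``the integral plus the mesh''), and you should state explicitly that your final bound $2C\eta M$ is uniform over all streams with $|g_i|\le Ch_i$, since $C$ enters only multiplicatively --- this is what the quantifier ``for all families of streams'' in the statement demands.
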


Before the proof of Lemma~\ref{admissible2}, we first state and prove 
the following simple lemma.

\begin{lemma}
  \label{lem1}
  Fix $\lambda \in (0, 1]$ and a sequence 
  $H_\infty = \{h_i\}_{i\in\N^*}$ with $h_i \in (0,1]$. We have:
  \begin{enumerate}
    \item $\sum\limits_{i = 1}^\infty h_i = +\infty 
    \;\Rightarrow\; \prod\limits_{i = 1}^\infty (1 - \lambda h_i) = 0;$
    \item If $\sum\limits_{i = 1}^\infty h_i = +\infty$, then 
    $\sum\limits_{i = 1}^\infty \left(\prod\limits_{j = 1}^{i-1} 
    (1 - \lambda h_j)\right) h_i = \frac{1}{\lambda}$;
    \item For all $n \in \N^*$ we have $\prod\limits_{i = 1}^n 
    (1 - \lambda h_i) \ge 1- \lambda \sum\limits_{i=1}^n h_i$.
  \end{enumerate}
\end{lemma}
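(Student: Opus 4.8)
The plan is to treat the three claims essentially independently, noting that (1) and (3) rest on two different elementary inequalities, while (2) follows from (1) by a telescoping identity. I would begin with part (3), which is a Weierstrass-type product inequality, and prove it by induction on $n$. The base case $n=1$ is the trivial equality $1-\lambda h_1 \ge 1 - \lambda h_1$. For the inductive step, since $\lambda \in (0,1]$ and $h_{n+1} \in (0,1]$ we have $\lambda h_{n+1} \le 1$, so the factor $1-\lambda h_{n+1}$ is nonnegative; hence I may multiply the inductive hypothesis $\prod_{i=1}^n (1-\lambda h_i) \ge 1 - \lambda\sum_{i=1}^n h_i$ by it without reversing the inequality. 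Expanding $(1-\lambda h_{n+1})\big(1-\lambda\sum_{i=1}^n h_i\big) = 1 - \lambda\sum_{i=1}^{n+1} h_i + \lambda^2 h_{n+1}\sum_{i=1}^n h_i$ and discarding the nonnegative last term yields exactly $1 - \lambda\sum_{i=1}^{n+1}h_i$, completing the induction.

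For part (1), I would use the bound $1-x \le e^{-x}$, valid for all real $x$. If some factor $1-\lambda h_i$ vanishes (possible only when $\lambda = h_i = 1$), the product is identically $0$ and there is nothing to prove. Otherwise every factor lies in $(0,1]$, so the partial products are positive and nonincreasing, and they satisfy $\prod_{i=1}^n (1-\lambda h_i) \le \exp\{-\lambda\sum_{i=1}^n h_i\}$. Since $\sum_{i} h_i = +\infty$, the right-hand side tends to $0$ as $n \to \infty$, and therefore the infinite product is $0$.

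For part (2), the key is a telescoping identity. Writing $P_0 := 1$ and $P_n := \prod_{j=1}^n (1-\lambda h_j)$, I observe that $P_{i-1} - P_i = P_{i-1}\big(1 - (1-\lambda h_i)\big) = \lambda h_i P_{i-1}$, so $P_{i-1} h_i = \tfrac{1}{\lambda}(P_{i-1}-P_i)$. Summing from $i=1$ to $N$ collapses the telescope to $\tfrac{1}{\lambda}(P_0 - P_N) = \tfrac{1}{\lambda}(1 - P_N)$; since all summands are nonnegative the partial sums are monotone and bounded by $\tfrac1\lambda$, so the series converges. Letting $N \to \infty$ and invoking part (1) to conclude $P_N \to 0$ then gives the claimed value $\tfrac{1}{\lambda}$.

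None of the three parts presents a genuine obstacle; the proofs are short and use only standard estimates. The only points that require care are the sign check $1-\lambda h_{n+1}\ge 0$ that legitimizes multiplying the inequality in (3), the handling of the degenerate case of a vanishing factor in (1), and the appeal to part (1) to drive the boundary term $P_N$ to zero in (2).
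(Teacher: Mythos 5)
Your proof is correct and follows essentially the same route as the paper: the same telescoping identity $\bigl(\prod_{j<i}(1-\lambda h_j)\bigr)h_i = \frac{1}{\lambda}\bigl(\prod_{j<i}(1-\lambda h_j)-\prod_{j\le i}(1-\lambda h_j)\bigr)$ for part (2) and the same induction for part (3). The only difference is that you actually prove part (1) via $1-x\le e^{-x}$, whereas the paper dismisses it as a standard fact from elementary analysis; your added care there (and with the degenerate vanishing factor) is harmless and correct.
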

  
\begin{proof}
  The first assertion is a standard result from elementary analysis. 
  We prove the second assertion. For each $i \in \N$ denote 
  $k_i = 1 - \lambda h_i$. We have
  $$\sum\limits_{i = 1}^\infty \left(\prod\limits_{j = 1}^{i-1} 
  (1 - \lambda h_j)\right) h_i = \left(\frac{1 - k_1}{\lambda} + 
  \frac{k_1 - k_1 k_2}{\lambda} + \frac{k_1 k_2 - k_1 k_ 2 k_3}{\lambda} + 
  \ldots\right) = \frac{1}{\lambda},$$
  where the last equality holds because 
  $\prod\nolimits_{i = 1}^\infty k_i = 0$ by the first assertion of the lemma. 
  
  We prove the third assertion by induction on $n \in \N^*$. 
  The case $n=1$ is immediate. 
  Assume that for $n = k$ the assertion holds, i.e. we have 
  $\prod\limits_{i = 1}^k (1 - \lambda h_i) \ge
  1- \lambda \sum\limits_{i=1}^k h_i$.
  For $n = k+1$, we have 
  \[\prod\limits_{i = 1}^{k+1} (1 - \lambda h_i) \ge 
  \left(1- \lambda \sum\limits_{i=1}^k h_i\right) (1 - \lambda h_{k+1}) \ge
  1- \lambda \sum\limits_{i=1}^{k+1} h_i. \qedhere\]
\end{proof}  

We are ready to prove Lemma~\ref{admissible2}.

\begin{proof}[Proof of Lemma~\ref{admissible2}]
Fix a family of streams $x (H_\infty) = (g_1, g_2, \ldots)$ with 
$|g_i| \le C h_i \; (i \in \N^*)$, and assume that $\alpha_h$ and $\beta_h$ 
are two families of discount factors with asymptotic discount rate $\lambda$. 
In that case we have $1 - \alpha_h = 1-\lambda h + m(h)$ and 
$1 - \beta_h = 1-\lambda h + n(h)$, where $m(h) / h \to 0$ and 
$n(h) / h \to 0$ as $h \to 0$. Consider the sum 
$$\sum\limits_{i = 1}^\infty \left(\prod\limits_{j = 1}^{i-1} 
(1 - \lambda h_j + k_i(h_j))\right) h_i,$$ where $k_i(h_j)$ is either 
$m(h_j)$ or $n(h_j)$. Now fix $h'$ such that for all $h$ with
$0 < h \le h'$ we have 
$\displaystyle{0 < \lambda \pm \frac{|m(h)|}{h} < 1}$ and 
$\displaystyle{0 < \lambda \pm \frac{|n(h)|}{h} < 1}$. For any sequence 
$H_\infty$ with $0 < h_i \le h'$ we have by the second assertion of 
Lemma~\ref{lem1}
\begin{align*}
\sum\limits_{i = 1}^\infty \left(\prod\limits_{j = 1}^{i-1} 
(1 - \lambda h_j + k_i(h_j))\right) h_i &\le
\sum\limits_{i = 1}^\infty \left(\prod\limits_{j = 1}^{i-1} 
\left[1-\left(\lambda -\sup\limits_{j\in\N} \left\{\frac{|m(h_j)|}{h_j}, 
\frac{|n(h_j)|}{h_j}\right\} \right) h_j\right]\right) h_i \\
&= \frac{1}{\lambda -\sup\limits_{j\in\N}
\left\{\frac{|m(h_j)|}{h_j},\frac{|n(h_j)|}{h_j}\right\}}; \\
\sum\limits_{i = 1}^\infty \left(\prod\limits_{j = 1}^{i-1} 
(1 - \lambda h_j + k_i(h_j))\right) h_i &\ge
\sum\limits_{i = 1}^\infty \left(\prod\limits_{j = 1}^{i-1} 
\left[1-\left(\lambda + \sup\limits_{j\in\N} \left\{\frac{|m(h_j)|}{h_j}, 
\frac{|n(h_j)|}{h_j}\right\} \right) h_j\right]\right) h_i \\
&= \frac{1}{\lambda + \sup\limits_{j\in\N}
\left\{\frac{|m(h_j)|}{h_j},\frac{|n(h_j)|}{h_j}\right\}}.
\end{align*}

Hence $\sum\limits_{i = 1}^\infty \left(\prod\limits_{j = 1}^{i-1} 
(1 - \lambda h_j + k_i(h_j))\right) h_i \to \frac{1}{\lambda}$ as 
$\sup\limits_{j\in\N} h_j \to 0$. Now we have
\begin{align*}
&\left|\sum\limits_{i = 1}^\infty \left(\prod\limits_{j = 1}^{i-1} 
(1 - \alpha_j) \right) g_i - \sum\limits_{i = 1}^\infty 
\left(\prod\limits_{j = 1}^{i-1} (1 - \beta_j) \right) g_i\right| \\
\le&
C \sum\limits_{i = 1}^\infty 
\left|\prod\limits_{j = 1}^{i-1} (1 - \lambda h_j + m(h_j)) h_i - 
\prod\limits_{j = 1}^{i-1}  (1 - \lambda h_j + n(h_j)) h_i\right| \\
=& C \sum\limits_{i = 1}^\infty
\left(\max\left\{\prod\limits_{j = 1}^{i-1} (1 - \lambda h_j + m(h_j)) h_i,
\prod\limits_{j = 1}^{i-1} (1 - \lambda h_j + n(h_j)) h_i\right\}\right)\\
-&C \sum\limits_{i = 1}^\infty
\left(\min\left\{\prod\limits_{j = 1}^{i-1} (1 - \lambda h_j + m(h_j)) h_i,
\prod\limits_{j = 1}^{i-1} (1 - \lambda h_j + n(h_j)) h_i\right\}\right)\\
=&C \sum\limits_{i = 1}^\infty
\left(\prod\limits_{j = 1}^{i-1} (1 - \lambda h_j + s_i(h_j))\right) h_i 
-C \sum\limits_{i = 1}^\infty 
\left(\prod\limits_{j = 1}^{i-1} (1 - \lambda h_j + r_i(h_j))\right) h_i,
\end{align*}
where $s_i(h_j)$ and $r_i(h_j)$ are either $m(h_j)$ or $n(h_j)$.
By the above discussion we have
\[\sum\limits_{i = 1}^\infty \prod\limits_{j = 1}^{i-1} 
(1 - \lambda h_j + s_i(h_j)) h_i - \sum\limits_{i = 1}^\infty 
\prod\limits_{j = 1}^{i-1}  (1 - \lambda h_j + r_i(h_j)) 
h_i \xrightarrow{\sup\nolimits h_j \to 0} 
\frac{1}{\lambda} - \frac{1}{\lambda} = 0. \qedhere\]
\end{proof}

\begin{corollary}
\label{mlvpp}
If $\alpha_h$ and $\beta_h$ are two families of discount factors with 
asymptotic discount rate $\lambda$, then we have:
\begin{align}
\lim_{\underset{h_1+h_2+\ldots = +\infty}{\sup_{i\in\N^*} h_i \to 0}} 
V^\alpha_{T_\infty,\lambda} &=
\lim_{\underset{h_1+h_2+\ldots = +\infty}{\sup_{i\in\N^*} h_i \to 0}} 
V^\beta_{T_\infty,\lambda}; \label{rx3001}\\ 
\lim_{\underset{h_1+h_2+\ldots = +\infty}{\sup_{i\in\N^*} h_i \to 0}} 
\lambda V^\alpha_{T_\infty,\lambda} =
\lim_{\underset{h_1+h_2+\ldots = +\infty}{\sup_{i\in\N^*} h_i \to 0}} 
v^\alpha_{T_\infty,\lambda} &= 
\lim_{\underset{h_1+h_2+\ldots = +\infty}{\sup_{i\in\N^*} h_i \to 0}} v
^{\beta}_{T_\infty,\lambda} =
\lim_{\underset{h_1+h_2+\ldots = +\infty}{\sup_{i\in\N^*} h_i \to 0}} 
\lambda V^\beta_{T_\infty,\lambda}. \label{rx3002}
\end{align}
\end{corollary}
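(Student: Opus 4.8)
The plan is to reduce everything to Lemma~\ref{admissible2} together with the elementary fact that the value of a zero-sum game is a nonexpansive functional of its payoff. First I would rewrite the total payoff of the $\lambda$-discounted game with stage duration as $\sum_{i=1}^\infty\big(\prod_{j=1}^{i-1}(1-\alpha_{h_j})\big)\widetilde g_i$ with $\widetilde g_i:=h_i g_i$. Along any play one has $|g_i|\le\|g\|_\infty$, hence $|\widetilde g_i|\le\|g\|_\infty h_i$, so the realized stream $(\widetilde g_1,\widetilde g_2,\ldots)$ meets the hypothesis of Lemma~\ref{admissible2} with $C=\|g\|_\infty$. The estimate produced in the proof of Lemma~\ref{admissible2} depends only on $C$, on $\lambda$, and on the two families $\alpha_h,\beta_h$ (through the auxiliary functions $m,n$), and not on the particular stream, so it is uniform over all admissible streams. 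Thus for every $\varepsilon>0$ there is $\delta>0$ so that for every partition $T_\infty$ with $\sup_i h_i<\delta$ and every play, the $\alpha$- and $\beta$-discounted realized payoffs differ by at most $\varepsilon$; taking expectations, the two total-payoff functionals $f^\alpha$ and $f^\beta$ satisfy $\|f^\alpha-f^\beta\|_\infty\le\varepsilon$ uniformly over strategy profiles $(\sigma,\tau)$.

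Next I would pass from payoffs to values. Since $\sup_\sigma\inf_\tau$ is nonexpansive for the sup-norm on payoffs, $\|f^\alpha-f^\beta\|_\infty\le\varepsilon$ forces $|V^\alpha_{T_\infty,\lambda}-V^\beta_{T_\infty,\lambda}|\le\varepsilon$, i.e. $V^\alpha_{T_\infty,\lambda}-V^\beta_{T_\infty,\lambda}\to 0$ as $\sup_i h_i\to 0$. It then remains to anchor the existence of the limit for one reference family: taking $\alpha^*_h=1-e^{-\lambda h}$, Remark~\ref{stth3}(1) identifies its total payoff with $\sum_i e^{-\lambda t_i}h_i g_i$, whence $\lambda V^{\alpha^*}_{T_\infty,\lambda}=v_{T_\infty,\lambda}$ (the value for $k(t)=\lambda e^{-\lambda t}$), whose limit exists by Corollary~\ref{thh3} in the state-blind case and by the remark following Theorem~\ref{xmn444} in the perfect-observation case. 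Hence $\lim V^{\alpha^*}_{T_\infty,\lambda}$ exists, and the already-established convergence $V^\alpha-V^\beta\to 0$ upgrades this to: $\lim V^\alpha_{T_\infty,\lambda}$ exists and equals $\lim V^\beta_{T_\infty,\lambda}$ for all admissible $\alpha,\beta$ of rate $\lambda$, which is exactly \eqref{rx3001}.

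For \eqref{rx3002} I would treat the normalizing constants. The computation in the proof of Lemma~\ref{admissible2} (equivalently Lemma~\ref{lem1}(2)) already gives $N^\alpha_{T_\infty}:=\sum_i\big(\prod_{j=1}^{i-1}(1-\alpha_{h_j})\big)h_i\to 1/\lambda$ as $\sup_i h_i\to 0$, and the same for $\beta$. Since $|V^\alpha_{T_\infty,\lambda}|\le\|g\|_\infty N^\alpha_{T_\infty}$ is uniformly bounded, the identity $v^\alpha_{T_\infty,\lambda}-\lambda V^\alpha_{T_\infty,\lambda}=V^\alpha_{T_\infty,\lambda}\,(1/N^\alpha_{T_\infty}-\lambda)$ together with $1/N^\alpha_{T_\infty}-\lambda\to 0$ gives $v^\alpha_{T_\infty,\lambda}-\lambda V^\alpha_{T_\infty,\lambda}\to 0$, so $\lim v^\alpha=\lim\lambda V^\alpha$, and likewise $\lim v^\beta=\lim\lambda V^\beta$. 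Chaining these with $\lim\lambda V^\alpha=\lim\lambda V^\beta$ (which is \eqref{rx3001} scaled by $\lambda$) yields all four equalities in \eqref{rx3002}.

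The only genuinely delicate point is the uniformity invoked in the first step: I must ensure that the bound furnished by Lemma~\ref{admissible2} is independent of the realized play, so that the path-wise estimate survives both the expectation and the subsequent $\sup_\sigma\inf_\tau$. Once that uniformity is pinned down and the existence of a single limit is borrowed from Corollary~\ref{thh3} (resp.\ the remark after Theorem~\ref{xmn444}), everything else is routine bookkeeping with the normalizing constants.
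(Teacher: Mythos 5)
Your proof is correct and follows essentially the same route as the paper, which simply derives \eqref{rx3001} from Lemma~\ref{admissible2} and \eqref{rx3002} from \eqref{rx3001} together with Remark~\ref{stth3}. You merely make explicit the steps the paper leaves implicit: the uniformity of the Lemma~\ref{admissible2} bound over realized payoff streams, the nonexpansiveness of $\sup_\sigma\inf_\tau$ in the payoff, the anchoring of existence via a reference family, and the convergence of the normalizing constants to $1/\lambda$.
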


\begin{proof}[Proof of Corollary~\ref{mlvpp}]
	\eqref{rx3001} follows from Lemma~\ref{admissible2}, and 
	\eqref{rx3002} follows from \eqref{rx3001} and Remark~\ref{stth3}.
\end{proof}

A particular case of the above lemma was established in \cite{Ney13} for games in which each stage has the same duration. 

\section{Final comments}
\label{the_final}

\begin{remark}[The case of infinite stochastic games with stage duration]
In this article, we assumed that action spaces are finite. However, it is not necessary, and we may assume that the action spaces are compact metric spaces, as long as the state space is still finite. Indeed, it is straightforward to define games with stage duration for this more general case. Under standard assumptions (see, for example, assumptions in \cite{Sor02}) on the payoff and transition probability functions, there is a value for the game with stage duration, for any fixed partition $T_\infty$.

Theorem~\ref{thh1} still holds in this more general setting, because 
the proof of Theorem~\ref{thh1} uses only the Shapley equation and the finiteness of the state space, and as long as the Shapley equation holds, the proof of Theorem~\ref{thh1} holds too.

Theorem~\ref{xmn444} also holds in this setting, because
it is based on a result from \cite{Sor17}, which still holds if action sets are compact metric spaces and the state space is finite.
\end{remark}

\begin{remark}[Stochastic games with public signals and its limit values]
\label{great_apple}
In this article, we considered two types of games: stochastic 
games with perfect observation of the state and state-blind stochastic games.
We may also consider an intermediate case of stochastic games with public 
signals. In such games players are given a public signal that depends on 
the current state, but they may not observe the state itself.
It is possible to give a natural definition of games 
with stage duration and public signals, which is done in \cite{Nov24}.
A distinctive feature of games with stage duration and public signals is the fact that there is no connection between the limit value (as the discount factor $\lambda$ tends to $0$) of a game with stage duration $1$ and limit values of corresponding games with vanishing stage duration. In the case of perfect observation of the state, there is a connection, see \cite[\S7.3]{SorVig16}.
\end{remark}

Let us finish with two propositions that show the connection between the model of games with stage duration presented here and the model of games with stage duration from \cite{Sor17}. Recall that $v^{\texttt{cont}}_{T_\infty,k}$ is the value of the discretization of a continuous-time Markov game, see \S\ref{cc01}.

\begin{proposition}
\label{h3er}
Fix the discretization of a continuous-time finite Markov game $(\Omega, I, J, g, q_1)$, where $q_1$ is an infinitesimal generator, and fix a stochastic game $(\Omega, I, J, g, q_2)$, where $q_2$ is the kernel. If $q_1 = q_2 = q$, then the uniform limits
$\lim\limits_{\underset{h_1+h_2+\ldots = T}{\sup_{i\in\N^*} h_i \to 0}} 
v^{\texttt{cont}}_{T_\infty,k}$ and 
$\lim\limits_{\underset{h_1+h_2+\ldots = T}{\sup_{i\in\N^*} h_i \to 0}} 
v_{T_\infty,k}$ both exist and are equal.
\end{proposition}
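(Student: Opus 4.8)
The plan is to recognize that this proposition is essentially a corollary of work already carried out, and indeed is contained in the proof of Theorem~\ref{xmn444}. The decisive observation is that both limits are governed by the \emph{same} data: since by hypothesis $q_1 = q_2 = q$, and since the instantaneous payoff $g$ and weight function $k$ are common to both models, the two families $v_{T_\infty,k}$ and $v^{\texttt{cont}}_{T_\infty,k}$ are built from identical ingredients, differing only in whether the one-stage transition is $Id + h_n q$ or $\exp\{h_n q\}$, and whether the stage payoff is $h_n k(t_n) g$ or $\int_{t_n}^{t_{n+1}} k(t) g\, dt$.

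First I would invoke \cite[Proposition~4.3]{Sor17}, recalled in \S\ref{cc01}, which guarantees that the uniform limit $\lim_{\sup_i h_i \to 0} v^{\texttt{cont}}_{T_\infty,k}$ exists (and equals the unique viscosity solution of \eqref{reenn3}). Next I would appeal directly to the body of the proof of Theorem~\ref{xmn444}: that argument does more than assert convergence of $v_{T_\infty,k}$, since it explicitly produces constants $C_1, C_2$ and bounds $\|v_{T_\infty,k} - v^{\texttt{cont}}_{T_\infty,k}\|_\infty$, showing this difference tends to $0$ as $\sup_i h_i \to 0$ (see inequalities \eqref{eq404}--\eqref{fpvv04} and the concluding estimate). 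Combining the existence of the continuous-time limit with the vanishing of this difference immediately yields that $\lim_{\sup_i h_i \to 0} v_{T_\infty,k}$ exists and coincides with $\lim_{\sup_i h_i \to 0} v^{\texttt{cont}}_{T_\infty,k}$, which is exactly the claim.

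I do not anticipate any genuine obstacle, as both halves of the statement have already been established and the proposition merely repackages them. An equally valid alternative route would bypass the error estimate and instead note that Theorem~\ref{xmn444} identifies $\lim v_{T_\infty,k}$ as the unique viscosity solution of \eqref{reenn3}, while \cite[Proposition~4.3]{Sor17} identifies $\lim v^{\texttt{cont}}_{T_\infty,k}$ as the unique viscosity solution of the same equation; uniqueness of viscosity solutions then forces equality. The only point meriting a brief check is that the hypothesis $q_1 = q_2$ is precisely what makes the generator in the two limiting equations (equivalently, the matrix $q$ entering the error bound) literally the same, so that the two characterizations agree term by term.
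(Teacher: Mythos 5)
Your proposal is correct, and in fact your ``alternative route'' at the end is precisely the paper's proof: the paper simply observes that by Theorem~\ref{xmn444} and \cite[Proposition~4.3]{Sor17} both limit functions are the unique viscosity solution of the same equation \eqref{reenn3}, so they must coincide. Your primary route --- invoking \cite[Proposition~4.3]{Sor17} for the existence of $\lim v^{\texttt{cont}}_{T_\infty,k}$ and then reusing the quantitative estimate $\|v_{T_\infty,k} - v^{\texttt{cont}}_{T_\infty,k}\|_\infty \to 0$ from the body of the proof of Theorem~\ref{xmn444} --- is equally valid and is really the same material viewed one step earlier: that estimate is exactly how Theorem~\ref{xmn444} is deduced from \cite[Proposition~4.3]{Sor17} in the first place, so your argument unwinds the theorem rather than citing its conclusion. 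What the error-estimate route buys is a quantitative rate of agreement between the two models for a fixed fine partition, not merely equality of the limits; what the paper's one-line route buys is brevity and independence from the internals of the earlier proof. Your closing remark that the hypothesis $q_1=q_2$ is what makes the limiting equations literally identical is the right point to check, and it is indeed all that is needed.
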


\begin{proof}
By  Theorem~\ref{xmn444} and \cite[Proposition~4.3]{Sor17} any of these limit functions is the unique viscosity solution of
$0 = \frac{d}{dt} v(t,\omega) + \texttt{Val}_{I\times J} [k(t) g(i,j,\omega) + 
\langle q(i,j)(\omega,\cdot)\,, v(t,\cdot)\rangle].$
\end{proof}

\begin{proposition}
  \label{4cd6YYu}
  Fix the discretization of a continuous-time state-blind finite Markov game $(\Omega, I, J, g, q_1)$, where $q_1$ is its infinitesimal generator, and fix a state-blind stochastic game $(\Omega, I, J, g, q_2)$, where $q_2$ is its kernel. If $q_1 = q_2 = q$, then the uniform limits 
  $\lim\limits_{\underset{h_1+h_2+\ldots = +\infty}{\sup_{i\in\N^*} h_i \to 0}} 
  v^{\texttt{cont}}_{T_\infty,k}$ and 
  $\lim\limits_{\underset{h_1+h_2+\ldots = +\infty}{\sup_{i\in\N^*} h_i \to 0}} 
  v_{T_\infty,k}$ both exist and are equal.
\end{proposition}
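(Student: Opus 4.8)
The plan is to mirror exactly the proof of Proposition~\ref{h3er}, the perfect-observation analogue: I will show that both uniform limits are the unique viscosity solution of one and the same partial differential equation, and hence must coincide. The whole argument reduces to matching two characterizations of the limit to a common PDE, invoking a uniqueness statement at the end.

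First I would invoke Theorem~\ref{thh1}. Since $(\Omega, I, J, g, q_2)$ is a finite state-blind stochastic game with kernel $q_2 = q$, Theorem~\ref{thh1} asserts that the uniform limit $\lim_{\sup h_i \to 0} v_{T_\infty,k}$ exists and is the unique viscosity solution of
\begin{equation*}
0 = \frac{d}{dt} v(t,p) + \texttt{Val}_{I\times J} [k(t) g(i,j,p) +
\langle p * q(i,j), \nabla v(t,p) \rangle].
\end{equation*}
Next I would turn to the discretization side. Here $v^{\texttt{cont}}_{T_\infty,k}$ is the value of the discretization of the continuous-time state-blind Markov game with infinitesimal generator $q_1 = q$, as recalled in \S\ref{cc01}. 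The corresponding convergence result for this model is the state-blind analogue in \cite{Sor17} (the one flagged as \cite[Proposition~5.3]{Sor17} in the remark following Theorem~\ref{thh1}), which gives that $\lim_{\sup h_i \to 0} v^{\texttt{cont}}_{T_\infty,k}$ exists and is the unique viscosity solution of the \emph{same} equation, precisely because the infinitesimal generator driving the continuous-time dynamics equals the kernel $q$.

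The final step is to identify the two limits. Uniqueness of the viscosity solution is supplied by \cite[Proposition~3.9]{Sor17}, the same statement already used at the end of the proof of Theorem~\ref{thh1}; since both limit functions solve one PDE that admits only one viscosity solution, they are equal. The only point requiring genuine care — the nearest thing to an obstacle — is verifying that the Hamiltonian appearing in the continuous-time characterization of \cite{Sor17} coincides term for term with the one in \eqref{rlp3}; but under the hypothesis $q_1 = q_2 = q$ the drift term $\langle p * q(i,j), \nabla v \rangle$ and the running payoff $k(t) g(i,j,p)$ match identically, so no discrepancy arises and the proof closes as in Proposition~\ref{h3er}.
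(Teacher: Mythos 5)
Your proposal is correct and follows essentially the same route as the paper: both limits are identified as the unique viscosity solution of the common PDE, via Theorem~\ref{thh1} for the stage-duration value and the state-blind convergence result of Sorin (Proposition~5.3 there) for the discretized continuous-time value. In fact you cite Theorem~\ref{thh1} where the paper's one-line proof writes Theorem~\ref{xmn444}, which appears to be a typo in the paper that your version silently corrects.
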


\begin{proof}
By  Theorem~\ref{xmn444} and \cite[Proposition~5.3]{Sor17} any of these limit functions is the unique viscosity solution of
$0 = \frac{d}{dt} v(t,p) + \texttt{Val}_{I\times J} [k(t) g(i,j,p) + 
\langle p * q(i,j), \nabla v(t,p) \rangle]$.
\end{proof}

\section{Acknowledgements}
The author is grateful to his research advisor Guillaume Vigeral for constant
attention to this work. 
The author is grateful to Sylvain Sorin for useful discussions.
The author is grateful to anonymous reviewers for many helpful comments.

\bibliographystyle{alphaurl}
\bibliography{ref}

\begin{thebibliography}{GHL05}

\bibitem[GHL03]{GuoHer03}
Xianping Guo and On{\'e}simo Hern{\'a}ndez-Lerma.
\newblock Zero-sum games for continuous-time markov chains with unbounded
  transition and average payoff rates.
\newblock {\em Journal of Applied Probability}, 40(2):327--345, 2003.
\newblock \href {https://doi.org/10.1239/jap/1053003547}
  {\path{doi:10.1239/jap/1053003547}}.

\bibitem[GHL05]{GuoHer05}
Xianping Guo and On{\'e}simo Hern{\'a}ndez-Lerma.
\newblock Zero-sum continuous-time markov games with unbounded transition and
  discounted payoff rates.
\newblock {\em Bernoulli}, 11(6):1009--1029, 2005.
\newblock \href {https://doi.org/10.3150/bj/1137421638}
  {\path{doi:10.3150/bj/1137421638}}.

\bibitem[LRS19]{LarRenSor19}
Rida Laraki, J\'er\^{o}me Renault, and Sylvain Sorin.
\newblock {\em Mathematical Foundations of Game Theory}.
\newblock Universitext. Springer International Publishing, 2019.
\newblock \href {https://doi.org/10.1007/978-3-030-26646-2}
  {\path{doi:10.1007/978-3-030-26646-2}}.

\bibitem[MSZ15]{MerSorZam15}
Jean-Fran{\c c}ois Mertens, Sylvain Sorin, and Shmuel Zamir.
\newblock {\em Repeated Games}.
\newblock Econometric Society Monographs. Cambridge University Press, 2015.
\newblock \href {https://doi.org/10.1017/CBO9781139343275}
  {\path{doi:10.1017/CBO9781139343275}}.

\bibitem[Ney13]{Ney13}
Abraham Neyman.
\newblock Stochastic games with short-stage duration.
\newblock {\em Dynamic Games and Applications}, 3(2):236--278, 2013.
\newblock \href {https://doi.org/10.1007/s13235-013-0083-x}
  {\path{doi:10.1007/s13235-013-0083-x}}.

\bibitem[Nov24]{Nov24}
Ivan Novikov.
\newblock Limit value in zero-sum stochastic games with vanishing stage
  duration and public signals.
\newblock Preprint, 2024.
\newblock \href {https://doi.org/10.48550/arXiv.2401.10572}
  {\path{doi:10.48550/arXiv.2401.10572}}.

\bibitem[Sha53]{Sha53}
Lloyd~Stowell Shapley.
\newblock Stochastic games.
\newblock {\em Proceedings of the National Academy of Sciences},
  39(10):1095--1100, 1953.
\newblock \href {https://doi.org/10.1073/pnas.39.10.1095}
  {\path{doi:10.1073/pnas.39.10.1095}}.

\bibitem[Sor02]{Sor02}
Sylvain Sorin.
\newblock {\em A First Course on Zero-Sum Repeated Games}, volume~37 of {\em
  Math\'ematiques et Applications}.
\newblock Springer-Verlag Berlin Heidelberg, first edition, 2002.
\newblock \href {https://doi.org/10.1007/978-3-030-26646-2}
  {\path{doi:10.1007/978-3-030-26646-2}}.

\bibitem[Sor18]{Sor17}
Sylvain Sorin.
\newblock Limit value of dynamic zero-sum games with vanishing stage duration.
\newblock {\em Mathematics of Operations Research}, 43(1):51--63, 2018.
\newblock \href {https://doi.org/10.1287/moor.2017.0851}
  {\path{doi:10.1287/moor.2017.0851}}.

\bibitem[SV16]{SorVig16}
Sylvain Sorin and Guillaume Vigeral.
\newblock Operator approach to values of stochastic games with varying stage
  duration.
\newblock {\em International Journal of Game Theory}, 45(1):389--410, 2016.
\newblock \href {https://doi.org/10.1007/s00182-015-0512-8}
  {\path{doi:10.1007/s00182-015-0512-8}}.

\bibitem[Zac64]{Zac64}
Lars~Erik Zachrisson.
\newblock {\em Advances in Game Theory. (AM-52), Volume 52}, pages 211--254.
\newblock Princeton University Press, 1964.
\newblock \href {https://doi.org/10.1515/9781400882014-014}
  {\path{doi:10.1515/9781400882014-014}}.

\bibitem[Zil16]{Zil16}
Bruno Ziliotto.
\newblock Zero-sum repeated games: Counterexamples to the existence of the
  asymptotic value and the conjecture
  $\operatorname{maxmin}=\operatorname{lim}v_{n}$.
\newblock {\em The Annals of Probability}, 44(2):1107 -- 1133, 2016.
\newblock \href {https://doi.org/10.1214/14-AOP997}
  {\path{doi:10.1214/14-AOP997}}.

\end{thebibliography}

\end{document}